\documentclass[11pt]{article}
\usepackage{silence}
\usepackage[T1]{fontenc}
\usepackage[utf8]{inputenc}
\usepackage{lmodern}
\usepackage{amsmath,amssymb,amsthm}
\usepackage{microtype}
\usepackage[margin=1.15in]{geometry}
\usepackage[hidelinks,pdftitle={The gate of self-address: where decidable adjudication ends},pdfauthor={Platon Sifnaios}]{hyperref}

\newtheorem{theorem}{Theorem}[section]
\newtheorem{proposition}[theorem]{Proposition}
\newtheorem{corollary}[theorem]{Corollary}
\newtheorem{lemma}[theorem]{Lemma}
\theoremstyle{definition}
\newtheorem{definition}[theorem]{Definition}
\newtheorem{question}[theorem]{Question}
\theoremstyle{remark}
\newtheorem{remark}[theorem]{Remark}
\newtheorem{convention}[theorem]{Convention}

\newcommand{\M}{\mathbf{M}}
\newcommand{\NN}{\mathbb{N}}
\newcommand{\mk}{\langle\,\rangle}
\newcommand{\Led}{\mathbf{L}}
\newcommand{\ask}{\textup{\scshape ask}}
\newcommand{\PS}{\textup{\scshape PSpace}}

\title{The gate of self-address:\\ where decidable adjudication ends}
\author{Platon Sifnaios\\ \small Independent researcher, Athens, Greece\\ \small p.sifnaios@gmail.com}
\date{}

\begin{document}
\maketitle

\begin{abstract}
An \emph{annulment structure} consists of a numbered domain of
\emph{distinctions} together with a $\Sigma^0_1$ \emph{manifestation}
predicate; an \emph{adjudicator} is a partial map assigning to
distinctions the verdicts \emph{annulled} or \emph{exempt}. We ask
which domains admit an adjudicator that is total, correct, and complete
in its mission --- that is, annulling every non-manifesting member
of the domain --- and we locate the boundary exactly. On the positive
side, domains with decidable manifestation admit canonical adjudicators
with certificates, instantiated here for Presburger-conditioned
distinctions and for finite-state re-entry systems. On the negative
side, adjoining to such a domain a single nullary gate, by which a
distinction may query the verdict passed on itself, destroys
decidability uniformly: the resulting domain carries a trichotomy in
which every adjudicator fails totality, exhaustiveness, or soundness
at a single distinction fixed in advance.
Bounding the depth of self-address refines this: each finite level
remains decidable, the hierarchy of iterated verdicts is the
synchronous update of a Boolean network on the query graph, deciding
whether it stabilises is \PS-complete for explicitly
presented networks, and periods as
large as $2^n-1$ occur at closure size $n$. What fails in the limit is
therefore convergence rather than decidability, and the diagonal
distinctions are exactly the points of oscillation, with period two;
their mean frequency of $1/2$ is precisely the value a reflective
oracle is forced to return there.
The boundary admits a second, degree-theoretic formulation: the least
Turing degree of an adjudicator with all three properties, over the
standard domain relative to an oracle $X$, is exactly $\deg(X')$, so
adjudication costs precisely one jump at every level.
The two formulations describe one phenomenon, and together they show
that the boundary is a three-way trade among determinacy, correctness,
and residence at the level adjudicated; the reflective oracles of
Fallenstein, Taylor and Christiano occupy the remaining corner. The
underlying fixed-point core is not tied to the Kleene numbering: a
dichotomy holds over every precomplete numbering in the sense of
Ershov, and the third case is available exactly where divergence has an
unmarked home. We situate the framework relative to the
categorical treatment of diagonal arguments, where it falls on the
intensional side of the divide between Kleene's two recursion
theorems. G\"odel's incompleteness theorems are nowhere used.
\end{abstract}

{\small
\noindent\emph{2020 Mathematics Subject Classification.} 03D25
(primary); 03D45, 68Q15 (secondary).

\smallskip
\noindent\emph{Keywords.} recursion theorem; creative set; effective
inseparability; precomplete numbering; Turing jump; Boolean network;
\PS-completeness; reflective oracle.}

\section{Introduction}\label{sec:intro}

This paper asks a question with a sharp answer: given a numbered
family of \emph{distinctions} --- items with a semi-decidable success
condition, here called \emph{manifestation} --- when can a single
effective procedure correctly and totally \emph{annul} exactly the
distinctions that fail to manifest? The answer is that this is
possible for a wide and natural class of domains, and that it fails
uniformly the moment one syntactic feature is present. Locating that
feature is the point of the paper.

The positive side comes first (Section~\ref{sec:decidable}). Domains
with decidable manifestation admit \emph{canonical} adjudicators that
are total, correct, and mission-complete, and that certify
each verdict by a finite object. Two instances are given: distinctions
conditioned on sentences of Presburger arithmetic, and finite-state
re-entry systems in the calculus of indications, whose transient and
period furnish the certificate. Neither language contains any
construct by which a distinction may refer to a verdict.

Adjoining exactly one such construct --- a nullary gate
$\ask$, evaluated at the verdict passed on the very system in
which it occurs --- destroys decidability, uniformly and at every
oracle level (Theorem~\ref{thm:cross}). The extended domain carries a
\emph{trichotomy}: every candidate adjudicator is defeated at a
single distinction, fixed in advance and the same for all, and the
defeat is exhaustively one of unsoundness, non-exhaustiveness, or
partiality
--- the failure of (S), of (E), or of (T). The
same trichotomy holds over the standard domain
(Theorem~\ref{thm:tri}), of which the gate construction is the
syntactic localisation.

Bounding the gate refines the picture considerably
(Section~\ref{sec:bounded}). Let the gate carry a parameter, so that a
distinction may query the verdict on any distinction rather than only
on itself, and let $a_k$ be the verdict function obtained by allowing
depth $k$ of iterated query. Each $a_k$ is total computable, and the
passage from $a_k$ to $a_{k+1}$ is the synchronous update of a Boolean
network on the query graph. Every finite level is therefore decidable;
deciding whether the sequence stabilises is \PS-complete
for explicitly presented networks;
and periods as large as $2^n-1$ are realised at closure size $n$,
inside the re-entry language itself. The crossing is accordingly not a
discontinuity but a limit, and what fails at the limit is convergence
rather than decidability. Under pure self-address the period is always
one or two, and it is two exactly at the diagonal distinctions; their mean
frequency there, $1/2$, is precisely the value a reflective oracle is
forced to return at the liar machine, which suggests reading
its randomisation as the assignment of a limiting frequency
(Remark~\ref{rem:mean}).

The boundary has a further formulation, in degrees rather than syntax
(Section~\ref{sec:jump}): the least Turing degree of a correct, total,
mission-complete adjudicator of the $X$-standard domain is exactly
$\deg(X')$.
Adjudication costs precisely one jump, at every level. Read together
with the crossing, this exhibits the boundary not as a wall but as a
three-way trade among three desiderata --- that the verdict be
definite, that it be correct, and that the adjudicator reside at the
level it adjudicates. Any two are available; all three are not. The
reflective oracles of \cite{FTC15} occupy the corner obtained by
surrendering definiteness, and we say so explicitly
(Remark~\ref{rem:reflective}), since the impossibility proved here is
relative to constraints that other work has found it profitable to
relax.

The remaining sections support and delimit this picture. The effective
content of the framework is determined exactly: the manifestation
problem is $1$-complete, creative, and computably isomorphic to the
halting problem; its complement is $\Pi^0_1$-complete and productive;
and manifestation cannot even be computably separated from explicit
exemption, so that the failure is not merely of decision but of
approximation (Section~\ref{sec:structure}). The adjudicator
properties and the three failure modes are classified in the
arithmetical hierarchy (Section~\ref{sec:indexsets}). The fixed-point
core is shown not to be an artefact of the Kleene numbering: a
dichotomy holds over every precomplete numbering in the sense of
Ershov, and the third horn --- the pricing of silence --- is available
exactly where divergence has an unmarked home, namely over complete
numberings with an unmarked special element
(Remark~\ref{rem:numberings}). Finally, we situate the framework
against the categorical treatment of diagonal arguments
(Section~\ref{sec:lawvere}); it falls on the intensional side of the
divide between Kleene's two recursion theorems, and what it offers
that those treatments do not is a boundary rather than a further
unification of impossibility.

Methodologically the ingredients are classical --- Kleene's recursion
theorem, Post's completeness and productivity, the isomorphism
theorems of Myhill and Smullyan, Ershov's theory of numberings --- and
we make no claim of difficulty for the individual proofs. The
contribution is the boundary itself, stated as a closure condition on
presented domains; its level-invariance
(Corollary~\ref{cor:rel}); and its two equivalent formulations, in
syntax and in degrees. The framework formalises a question with a long
informal history outside computability theory; nothing below depends
on that history, and the paper is self-contained. We note once that
G\"odel's incompleteness theorems are used nowhere: the entire
development runs on the recursion theorem and the elementary theory of
c.e.\ sets.

\section{Preliminaries and the annulment framework}\label{sec:prelim}

Fix a standard acceptable numbering $(\varphi_e)_{e\in\NN}$ of the
unary partial computable functions, with the s-m-n and
universal-machine properties; $W_e=\operatorname{dom}\varphi_e$. Our
notation for computability-theoretic notions ($\leq_m$, $\leq_1$,
$\leq_T$, creative and productive sets, the jump $X'$, the classes
$\Sigma^0_n,\Pi^0_n$ and their relativisations) follows
\cite{Rog67,Soa87}. $K=\{e:\varphi_e(e)\!\downarrow\}$ and
$K_0=\{\langle e,x\rangle:\varphi_e(x)\!\downarrow\}$ denote the
halting sets.

\begin{convention}\label{conv:mark}
A distinguished output value $1$ is called the \emph{mark}, written
$\mk$. All programs are evaluated at input $0$.
\end{convention}

\begin{definition}[Annulment structure]\label{def:structure}
A \emph{distinction} is an index $p\in\NN$. The \emph{manifestation
set} is
\[
\M \;=\; \{\,p\in\NN : \varphi_p(0)\!\downarrow = 1\,\},
\]
and the \emph{ledger} is $\Led=\NN\setminus\M$. A \emph{domain} is a
pair $(D,\M\!\restriction\! D)$ with $D\subseteq\NN$; the
\emph{standard domain} is $(\NN,\M)$. An \emph{adjudicator} is an index $a$, acting on distinctions through
$\varphi_a$: $a$ \emph{annuls} $p$ if $\varphi_a(p)\!\downarrow=1$,
\emph{exempts} $p$ if $\varphi_a(p)\!\downarrow\neq1$, and is
\emph{silent} on $p$ if $\varphi_a(p)\!\uparrow$.
\end{definition}

\begin{definition}[The adjudicator properties]\label{def:TUV}
Relative to a domain $D$, an adjudicator $a$ is:
\begin{itemize}
\item[(T)] \emph{total} if it is silent on no $p\in D$;
\item[(E)] \emph{exhaustive} if it annuls every $p\in D$;
\item[(M)] \emph{mission-complete} if it annuls every
$p\in D\setminus\M$;
\item[(S)] \emph{sound} if $\varphi_a(p)\!\downarrow=1$ implies
$p\notin\M$, for $p\in D$.
\end{itemize}
A total adjudicator is \emph{correct} for $D$ if it annuls exactly
the non-manifesting members of $D$ --- equivalently, if it is both
sound and mission-complete. The \emph{canonical adjudicator} of $D$
is the (set-theoretic) function $\chi_D(p)=1$ iff
$p\in D\setminus\M$; it is total on $D$ and correct.
\end{definition}

The mark is deliberately minimal. The notation $\mk$ is Spencer-Brown's
cross and denotes the marked state; it is not the empty multiset, which
denotes the void. Readers of \cite{Sif26} may take the mark as the
form $\langle\varnothing\rangle$, the first non-empty hereditarily
finite multiset, under the identification of forms with such multisets
proved there (Theorem~8.2); nothing below depends on this reading.

\section{The trichotomy}\label{sec:tri}

\begin{theorem}[Trichotomy]\label{thm:tri}
There is a total computable function $d$ such that for every index
$a$, writing $\delta_a:=d(a)$:
\begin{enumerate}
\item\label{tri:fix} $\delta_a\in\M \iff
\varphi_a(\delta_a)\!\downarrow=1$;
\item\label{tri:cases} exactly one of the following holds:
\emph{(i)} $\varphi_a(\delta_a)\!\downarrow=1$;
\emph{(ii)} $\varphi_a(\delta_a)\!\downarrow\neq1$;
\emph{(iii)} $\varphi_a(\delta_a)\!\uparrow$.
\end{enumerate}
Consequently no adjudicator over the standard domain is simultaneously
total, exhaustive, and sound: \emph{(i)} refutes \emph{(S)},
\emph{(ii)} refutes \emph{(E)}, \emph{(iii)} refutes \emph{(T)}.
\end{theorem}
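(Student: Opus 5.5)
The plan is to construct $d$ by the recursion theorem with parameters. First I build a total computable $g$ via s-m-n such that, for all $a$ and $q$, $\varphi_{g(a,q)}(x)$ ignores $x$, runs $\varphi_a(q)$, and outputs $1$ (the mark) if that computation converges with value $1$; otherwise it diverges. It does so both when $\varphi_a(q)\!\downarrow\neq1$ and when $\varphi_a(q)\!\uparrow$. The recursion theorem with parameters then supplies a total computable $d$ with $\varphi_{d(a)}=\varphi_{g(a,d(a))}$ for every $a$. Put $\delta_a=d(a)$. The reduction to one-place functions uses the pairing convention fixed in Section~\ref{sec:prelim}.

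For item~\ref{tri:fix} I would just unwind the definitions. We have $\delta_a\in\M$ iff $\varphi_{\delta_a}(0)\!\downarrow=1$. This holds iff $\varphi_{g(a,\delta_a)}(0)\!\downarrow=1$, and by construction that holds iff $\varphi_a(\delta_a)\!\downarrow=1$. Both directions go through because $g(a,q)$ outputs only the mark and does so exactly in that case. Item~\ref{tri:cases} is immediate: a partial function at a point either diverges, or converges to $1$, or converges to something other than $1$. These three cases are mutually exclusive and exhaustive.

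For the consequences I check each case against the definitions in Definition~\ref{def:TUV}, with $D=\NN$. In case (i), $a$ annuls $\delta_a$, and item~\ref{tri:fix} gives $\delta_a\in\M$, which contradicts (S). In case (ii), $a$ exempts $\delta_a$, so it does not annul every member of $D$, which contradicts (E). In case (iii), $a$ is silent on $\delta_a$, which contradicts (T). Since some case holds, no $a$ satisfies all three properties.

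There is no real obstacle here. The one point needing care is that $d$ must be \emph{total} computable and uniform in $a$, so that $\delta_a$ is fixed in advance of any particular adjudicator. This is exactly what the parametrised form of Kleene's theorem provides: it yields a total computable fixed-point function, rather than merely an existential fixed point for each $a$.
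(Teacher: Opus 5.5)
Your proof is correct and follows the paper's route. Both use s-m-n to build an auxiliary index uniformly in $(a,q)$, apply the recursion theorem with parameters to obtain a total computable $d$, and then unwind $\varphi_{\delta_a}(0)$. The only difference is cosmetic: when $\varphi_a(\delta_a)\!\downarrow\neq1$, the paper's auxiliary program $\theta$ outputs $0$ rather than diverging, so that in case (ii) $\delta_a$ halts explicitly exempt. This is immaterial for item~(1), since membership in $\M$ depends only on whether the output is $1$.
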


\begin{proof}
Let
\[
\theta(a,x)=
\begin{cases}
1 & \text{if }\varphi_a(x)\!\downarrow=1,\\
0 & \text{if }\varphi_a(x)\!\downarrow\neq1,\\
\uparrow & \text{if }\varphi_a(x)\!\uparrow.
\end{cases}
\]
By s-m-n fix total computable $s$ with
$\varphi_{s(a,x)}(y)=\theta(a,x)$ for all $y$; by the recursion theorem
with parameters \cite{Kle38,Rog67} fix total computable $d$ with
$\varphi_{d(a)}=\varphi_{s(a,d(a))}$ for all $a$. Then
$\varphi_{\delta_a}(0)=\theta(a,\delta_a)$, giving \eqref{tri:fix}; the
three cases of $\theta$ are mutually exclusive and jointly exhaustive,
giving \eqref{tri:cases}. If (T) and (E) hold then case (i) obtains,
refuting (S).
\end{proof}

\begin{remark}[What the theorem does and does not say]\label{rem:trivial}
The bare conjunction (T)$\wedge$(E)$\wedge$(S) over the standard domain
is refutable without fixed points: (E) implies (T) outright, and any
fixed index $p_0$ of the constant-mark program witnesses the failure
of (E)$\wedge$(S), since an exhaustive adjudicator annuls $p_0\in\M$.
The content of Theorem~\ref{thm:tri} is therefore not the
inconsistency of the conjunction but the conjunction of three stronger
facts: the classification of \emph{arbitrary} $a$ --- total or
partial, exhaustive or selective --- into exactly one of the three
horns; the uniformity and (Corollary~\ref{cor:rel})
oracle-independence of the witness map $d$; and the
\emph{verdict-conditioned} character of the witness ---
$\delta_a\in\M$ exactly if $a$ annuls $\delta_a$ --- which no
constant witness provides and on which everything that follows
depends: Corollaries~\ref{cor:nodecider} and~\ref{cor:audit} (neither
of which follows from the constant witness), the relativisation, the
crossing (Theorem~\ref{thm:cross}), and the results of
Remark~\ref{rem:numberings}. In particular
Corollary~\ref{cor:nodecider} concerns adjudicators that annul
\emph{selectively and correctly}; against these the constant witness
has no purchase.
\end{remark}

\begin{remark}\label{rem:rca}
The construction is a finite manipulation of indices; the proof
formalises without change in $\mathsf{RCA}_0$, and the map $d$ may be
taken primitive recursive.
\end{remark}

\begin{corollary}\label{cor:nodecider}
There is no total $a$ with $\varphi_a(p)\!\downarrow=1\iff p\notin\M$
for all $p$; equivalently, $\M$ is undecidable.
\end{corollary}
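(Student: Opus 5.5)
The plan is to apply Theorem~\ref{thm:tri} directly to a putative decider and read off a contradiction from the verdict-conditioned fixed point in part~\eqref{tri:fix}. Suppose $a$ is total with $\varphi_a(p)\!\downarrow=1\iff p\notin\M$ for all $p$, and set $\delta_a=d(a)$. Since $a$ is total, horn~(iii) is excluded, so either (i) or (ii) holds at $\delta_a$.

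In case~(i), $\varphi_a(\delta_a)\!\downarrow=1$. Part~\eqref{tri:fix} then gives $\delta_a\in\M$, while the hypothesis on $a$ gives $\delta_a\notin\M$. In case~(ii), $\varphi_a(\delta_a)\!\downarrow\neq1$. Part~\eqref{tri:fix} now gives $\delta_a\notin\M$, while the hypothesis on $a$ (read right to left) would force $\varphi_a(\delta_a)=1$. Both cases are contradictory, so no such $a$ exists. This is exactly the selective, correct adjudicator against which Remark~\ref{rem:trivial} says the constant witness has no purchase. Here the work is done by the biconditional in~\eqref{tri:fix}, not by exhaustiveness.

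For the stated equivalence with undecidability of $\M$, I will argue both directions.
\begin{itemize}
\item If $\M$ were decidable, the program that outputs $1$ exactly when $p\notin\M$ and $0$ otherwise is total computable. By acceptability of the numbering it has an index $a$, and that $a$ is a decider of the forbidden kind.
\item Conversely, if such an $a$ exists, then $p\in\M\iff\varphi_a(p)\neq1$ decides $\M$, because $a$ is total.
\end{itemize}
So nonexistence of the decider is equivalent to undecidability of $\M$.

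I expect no genuine obstacle. The only point needing care is that the decider hypothesis is an exact biconditional, so it must be used in both directions: one direction in case~(i), the converse in case~(ii).
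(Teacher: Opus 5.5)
Your proof is correct and follows the paper's argument. The paper chains the two biconditionals, $\varphi_a(\delta_a)\!\downarrow=1\iff\delta_a\in\M\iff\varphi_a(\delta_a)\neq1$, which is your case split written in one line. Your treatment of the ``equivalently'' clause, which the paper leaves implicit, is also right. Incidentally, totality is not needed for the contradiction: in case (iii), part \eqref{tri:fix} gives $\delta_a\notin\M$, and the hypothesis would then force $\varphi_a(\delta_a)\!\downarrow=1$.
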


\begin{proof}
Instantiate \eqref{tri:fix}:
$\varphi_a(\delta_a)=1\iff\delta_a\in\M\iff\varphi_a(\delta_a)\neq1$.
\end{proof}

\begin{corollary}[Failure of self-audit]\label{cor:audit}
There is a total computable $c$ such that $\sigma_a:=c(a)$ satisfies
$\sigma_a\in\M\iff\exists p\,(\varphi_a(p)\!\downarrow=1\wedge
p\in\M)$. If $a$ is total and exhaustive, then $\sigma_a\in\M$
while $a$ annuls $\sigma_a$.
\end{corollary}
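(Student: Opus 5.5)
The plan is to build $\sigma_a$ directly from $a$ by s-m-n; no fixed point is needed, since $\sigma_a$ does not have to refer to itself. For each $a$, consider the program that on any input $y$ runs a dovetailed search over pairs $(p,s)$. At stage $s$ it checks two things for every $p\le s$: whether $\varphi_{a,s}(p)\!\downarrow=1$, and whether $\varphi_{p,s}(0)\!\downarrow=1$. The first time both hold for some $p$, it halts with output $1$; otherwise it diverges.

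The function $\psi(a,y)$ computed by this search is partial computable. By s-m-n there is a total computable $c$ with $\varphi_{c(a)}(y)=\psi(a,y)$. Then $\varphi_{\sigma_a}(0)\!\downarrow=1$ exactly when the search succeeds, that is, exactly when some $p$ satisfies both $\varphi_a(p)\!\downarrow=1$ and $p\in\M$. This gives the biconditional $\sigma_a\in\M\iff\exists p\,(\varphi_a(p)\!\downarrow=1\wedge p\in\M)$. The only point to check is that the search is dovetailed over both computations, so that a single divergent $\varphi_a(p)$ or $\varphi_p(0)$ cannot block the others. This is the $\Sigma^0_1$ closure of the condition, and it is routine.

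For the second claim, let $a$ be total and exhaustive. Exhaustiveness means $\varphi_a(p)\!\downarrow=1$ for every $p$, so the existential reduces to $\exists p\,(p\in\M)$. This holds, witnessed for instance by an index $p_0$ of the constant-mark program as in Remark~\ref{rem:trivial}. Hence $\sigma_a\in\M$. By exhaustiveness again, $a$ annuls $\sigma_a$.

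No step is a genuine obstacle; the dovetailing in the s-m-n construction is the only place requiring care. One could instead take a recursion-theorem route parallel to Theorem~\ref{thm:tri}, but the statement does not make $\sigma_a$ refer to itself, so plain s-m-n suffices. The point of the corollary is that $a$'s own audit witness $\sigma_a$ exhibits its unsoundness: $\sigma_a$ is an annulled manifesting distinction, uniformly computable from $a$.
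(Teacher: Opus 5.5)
Your proof is correct, and your construction of $c$ is the paper's: a dovetailed $\Sigma^0_1$ search packaged by s-m-n, with no fixed point needed for $\sigma_a$. You differ in the witness for the existential in the second claim.

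The paper takes $\delta_a$. Under (T)$\wedge$(E), case (i) of Theorem~\ref{thm:tri} obtains, so $a$ annuls $\delta_a$, and $\delta_a\in\M$ by~\eqref{tri:fix}. You take a fixed index $p_0$ of the constant-mark program.

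Your route is more elementary, since the recursion theorem is not used anywhere. It also shows that the corollary \emph{as stated} does follow from the constant witness. This is in tension with the parenthetical claim in Remark~\ref{rem:trivial}.

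What the paper's route buys is scope. The $\delta_a$ argument needs only that $a$ annul its own diagonal distinction, i.e.\ $a\in H_i$. So it also yields $\sigma_a\in\M$ for every total mission-complete adjudicator, since these land in case (i), as noted after Theorem~\ref{thm:index}. No fixed witness can do that uniformly. For any fixed $p_0\in\M$, the adjudicator that exempts $p_0$ and annuls every other $p$ is total, computable and mission-complete, yet does not annul $p_0$.

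In both routes, the final clause that $a$ annuls $\sigma_a$ is simply exhaustiveness.
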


\begin{proof}
The matrix is $\Sigma^0_1$ uniformly in $a$; let $\sigma_a$ dovetail
the search, outputting $1$ on success and diverging otherwise. Under
(T)$\wedge$(E), case (i) of Theorem~\ref{thm:tri} supplies the witness
$\delta_a$.
\end{proof}

\begin{corollary}[Relativisation]\label{cor:rel}
Let $X\subseteq\NN$ and let $(\varphi^X_e)$ be the standard enumeration
of $X$-partial computable functions, $\M^X=\{p:\varphi^X_p(0)\!\downarrow=1\}$.
The \emph{same} function $d$ of Theorem~\ref{thm:tri} satisfies, for
every $a$ and every $X$:
$\delta_a\in\M^X\iff\varphi^X_a(\delta_a)\!\downarrow=1$, with the
trichotomy verbatim. Hence for every oracle $X$, no $X$-computable
adjudicator over the $X$-standard domain is simultaneously total,
exhaustive, and sound.
\end{corollary}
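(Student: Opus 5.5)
The plan is to rerun the proof of Theorem~\ref{thm:tri} with every ingredient relativised to $X$, while checking that the \emph{indices} produced never depend on $X$. The key observation is that the function $d$ was built from two purely syntactic operations on indices: the s-m-n function $s$ and the recursion-theorem fixed-point map. Both can be chosen as operations on codes of oracle machines rather than on the functions those codes compute.

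\textbf{Step 1.} I fix the standard enumeration $\varphi^X_e$ of oracle Turing machines with oracle $X$. The unrelativised $\varphi_e$ is the case $X=\emptyset$; more precisely, it is the same machine list read with the oracle tape ignored, which is the standard convention. I then define $\theta^X(a,x)$ exactly as in the proof of Theorem~\ref{thm:tri}, with $\varphi_a$ replaced by $\varphi^X_a$. The machine computing $\theta^X$ is a single oracle machine $\Theta$, independent of $X$: it simulates machine $a$ on $x$, passing oracle queries through, and outputs $1$ or $0$ according to the result.

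\textbf{Step 2.} I invoke the uniform s-m-n theorem for oracle machines. It gives a total computable $s$, not depending on $X$, with $\varphi^X_{s(a,x)}(y)=\theta^X(a,x)$ for all $X$ and $y$. This holds because $s(a,x)$ is obtained by hard-coding $a,x$ into the code of $\Theta$, a syntactic operation blind to the oracle.

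\textbf{Step 3.} I apply the recursion theorem with parameters in its relativised, uniform form. Kleene's proof produces $d(a)=h(a)$ with $h$ computed from $s$ by composing s-m-n indices, and it yields $\varphi^X_{d(a)}=\varphi^X_{s(a,d(a))}$ simultaneously for all $X$. The verification uses only that the relativised universal machine and s-m-n maps behave identically on codes for every oracle.

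I expect the main obstacle to be justifying that the \emph{same} $d$ from Theorem~\ref{thm:tri} works, rather than merely some $d$. There are two routes. The first is to note that Theorem~\ref{thm:tri} is free to fix $s$ and the fixed-point map via the oracle-machine enumeration from the outset, so its $d$ is already the uniform one. The second, if the unrelativised enumeration is taken as primary, is to insist that it is the $X=\emptyset$ slice of the oracle enumeration. This is an acceptable-numbering convention, which I would state explicitly in the proof.

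\textbf{Step 4.} With $d$ in hand, $\varphi^X_{\delta_a}(0)=\theta^X(a,\delta_a)$. The definition of $\M^X$ then gives $\delta_a\in\M^X\iff\varphi^X_a(\delta_a)\!\downarrow=1$. The three cases of $\theta^X$ remain exclusive and exhaustive, so the trichotomy transfers verbatim.

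\textbf{Step 5.} Finally, an $X$-computable adjudicator is some $\varphi^X_a$. If it is total and exhaustive on $\NN$, then case (i) holds at $\delta_a$. That puts $\delta_a\in\M^X$ while $a$ annuls $\delta_a$, so soundness fails.
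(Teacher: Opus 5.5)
Your proposal is correct and follows the paper's proof. Both rest on the observation that the s-m-n map $s$ and the fixed-point map of the parametrised recursion theorem act on program text only, so that $\varphi^X_{d(a)}(0)=\theta^X(a,d(a))$ holds for all $X$ simultaneously and the case analysis of Theorem~\ref{thm:tri} transfers unchanged; your explicit stipulation that the unrelativised numbering be the $X=\emptyset$ slice of the oracle enumeration is a convention the paper leaves implicit in the phrase ``the same $d$'', and stating it is an improvement.
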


\begin{proof}
The s-m-n index function $s$ manipulates program text only and is
oracle-independent, and the parametrised recursion theorem derived from
it inherits this independence \cite[chs.~9 and~11]{Rog67}. The defining program
text of $\theta$ is fixed, its behaviour varying with $X$; hence
$\varphi^X_{d(a)}(0)=\theta^X(a,d(a))$ for all $X$ simultaneously, and
the proof of Theorem~\ref{thm:tri} goes through unchanged.
\end{proof}

\begin{remark}[The core is not tied to the Kleene numbering]\label{rem:numberings}
The fixed-point core of Theorem~\ref{thm:tri} is an instance of a more
general phenomenon, which we record without proof since nothing below
uses it. Let $\gamma:\NN\to S$ be a numbering and $m:S\to\{0,1\}$ any
map attaining both values, with no effectivity assumed. If $\gamma$ is
precomplete in the sense of Ershov \cite{Ers73}, then for every total
computable $A:\NN\to\{0,1\}$ there is $n$ with $m(\gamma(n))=A(n)$:
Ershov's fixed point theorem applied to the map sending $x$ to an
index of a witness for $A(x)$. This is the (i)/(ii) dichotomy. The
third case requires more: it requires that divergence be sent
somewhere unmarked, which is available when $\gamma$ is complete with
special index $u$ satisfying $m(\gamma(u))=0$. The Kleene numbering is
complete with the totally undefined function as special element, which
is why Theorem~\ref{thm:tri} has three cases rather than two. Whether
the three-case form conversely forces such an element is, as far as we
know, open, and we do not pursue it here.
\end{remark}

\section{Decidable domains, certificates, and the crossing}\label{sec:decidable}

\begin{definition}\label{def:decfield}
A domain $D$ is \emph{decidable} if the restricted manifestation
problem $\{p\in D:p\in\M_D\}$ is decidable, where $\M_D$ is the
manifestation predicate appropriate to the presentation of $D$. A
\emph{certificate scheme} for $D$ is a computable relation
$C(p,w)$ of distinctions and finite objects such that $p\notin\M_D\iff
\exists w\,C(p,w)$ with a computable witness function.
\end{definition}

\begin{proposition}\label{prop:canonical}
If $D$ is decidable, the canonical adjudicator $\chi_D$ is total
computable and correct. Two instances:
\begin{itemize}
\item[\emph{(a)}] \emph{Presburger-conditioned distinctions}: $D$
indexed by sentences $\phi$ of Presburger arithmetic, with
$d_\phi\in\M_D\iff\NN\models\phi$. Decidability by
\cite{Pre29}; a decision procedure with certificates is obtained by
the B\"uchi--Elgot automata method \cite{Buc62,Elg61,WB95}, the
accepting or rejecting run of the associated automaton serving as the
witness.
\item[\emph{(b)}] \emph{Finite-state re-entry systems} in the calculus
of indications \cite{SB69,Var75,KV80}: systems of $n$ feedback
equations over $\{\text{void},\text{mark}\}$ built from crossing and
juxtaposition, observed at one coordinate. The state space has size
$\leq 2^n$ and the evolution is deterministic, so the behaviour is
ultimately periodic; the pair $(\mu,\pi)$ of transient and
period is a certificate scheme: $p\notin\M_D$ iff the
prefix-plus-cycle of length $\mu+\pi$ is mark-free, and
$\mu+\pi\leq2^n$ bounds the certificate effectively in the
presentation.
\end{itemize}
\end{proposition}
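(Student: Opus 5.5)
The general claim is immediate from the definitions, so I will settle it first and spend the real effort on the two instances. If $D$ is decidable, then $D\setminus\M_D$ is decidable as a subset of $D$. Hence $\chi_D(p)=1$ iff $p\in D\setminus\M_D$ (and $0$ otherwise on $D$) is total computable on $D$. Pick an index $a$ computing it. Then $a$ is silent nowhere on $D$, so (T) holds. It annuls exactly $D\setminus\M_D$, which gives (M) and (S), hence correctness by Definition~\ref{def:TUV}. For certificates, the trivial scheme $C(p,w)\iff w=0\wedge\chi_D(p)=1$ already satisfies Definition~\ref{def:decfield}, with the constant witness function $0$. The instances are interesting only because they supply \emph{informative} witnesses.

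For (a), I take $D$ to be the set of indices $d_\phi$ of a fixed program that, uniformly in $\phi$, runs a Presburger decision procedure and outputs $1$ iff $\NN\models\phi$, and outputs $0$ otherwise. Then $d_\phi\in\M$ iff $\NN\models\phi$, and the predicate is decidable by \cite{Pre29}, so the first paragraph applies. For the certificate I will use the B\"uchi--Elgot translation \cite{Buc62,Elg61,WB95}. Each formula is compiled effectively into a finite automaton over base-$2$ digit tuples, and truth of the sentence reduces to emptiness of the resulting automaton. I take $w$ to be the pair consisting of the constructed automaton and either a reachable accepting path or a proof of emptiness, namely the reachable-state set closed under transitions and containing no accepting state. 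Both kinds of $w$ are finite, checkable, and computable from $\phi$. The main obstacle is stating precisely what is checked: the verifier has to recompute the automaton from $\phi$, since the construction is deterministic, so only the run or closure needs verifying.

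For (b), a presentation consists of $n$ Boolean update equations $x_i'=f_i(x)$ built from negation (crossing) and disjunction (juxtaposition), an initial state, and an observed coordinate. The distinction $p$ iterates the map and outputs $1$ at the first step where the coordinate is marked, and diverges otherwise. Since the state space has size $2^n$, pigeonhole gives indices $\mu<\mu+\pi\le 2^n$ with $s_\mu=s_{\mu+\pi}$, and the orbit is then determined by its prefix of length $\mu+\pi$. Hence $p\in\M$ iff the coordinate is marked somewhere in $s_0,\dots,s_{\mu+\pi-1}$, which is decidable by simulating $2^n$ steps. The certificate $(\mu,\pi)$ is verified by checking $s_\mu=s_{\mu+\pi}$ and that the observed coordinate is unmarked on the prefix, and it is found by simulation. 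The only care needed is that "manifestation" here means ever marked; if the intended observation is instead eventual behaviour, the same cycle inspection still decides it.
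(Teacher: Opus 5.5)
Your proposal is correct and takes the paper's route. The paper's proof is the single line ``immediate from the definitions and the cited decidability results'' (Presburger decidability with the B\"uchi--Elgot automata, and the pigeonhole bound on a state space of size $2^n$), and you have simply written those details out, including the accurate side remarks that a constant-witness scheme already satisfies Definition~\ref{def:decfield} and that in (b) any repeat pair $s_\mu=s_{\mu+\pi}$ with $\mu+\pi\le 2^n$, not only the minimal transient and period, is a checkable certificate.
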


\begin{proof}
Immediate from the definitions and the cited decidability results.
\end{proof}

\begin{remark}[On the choice of the re-entry formalism]\label{rem:formalism}
A finite-state re-entry system is precisely a synchronous Boolean
network --- equivalently, an input-free sequential circuit ---
presented in the primary-algebra signature: over the two-element
algebra $\{\text{void},\text{mark}\}$, crossing is negation and
juxtaposition is disjunction, and the translation is the identity on
semantics. The Spencer-Brown presentation is thus a choice of
vocabulary, not of mathematical content, and the reader who prefers
Boolean networks loses nothing by making the substitution throughout.
It is retained here only because it makes the absence of a
verdict-querying construct visible in the syntax: the primary algebra
has exactly two operations, and neither can name an adjudicator.
\end{remark}

The two instances share a feature that the next theorem shows to be
essential: their languages have no construct by which a distinction
may query the verdict of an adjudicator. Adjoining that single
construct destroys decidability, uniformly. Making this precise
requires some care, since the gate is to be evaluated at the index of
the system in which it occurs, and that index is fixed only once the
extended language has been numbered. We therefore separate the
syntactic from the semantic step.

\begin{definition}[The ask-extension]\label{def:ask}
Let $D$ be a domain presented by a language $L$ with a computable
numbering of its systems. Let $L^{+}$ be the language obtained by
adjoining to $L$ a single nullary gate $\ask$, as a symbol and
nothing more; $L^{+}$ is a decidable set of finite syntactic objects,
and we fix once and for all a computable numbering
$\nu:\NN\to L^{+}$ of it, extending that of $L$ and independent of any
semantics. Now let $\alpha$ be a partial function on $\NN$. The
\emph{ask-extension} $D^{+}_\alpha$ is the domain whose systems are
those of $L^{+}$, evaluated as in $L$ except that in the system
$\nu(p)$ every occurrence of $\ask$ evaluates to the mark if
$\alpha(p)\!\downarrow=1$, to the void if $\alpha(p)\!\downarrow\neq1$,
and diverges if $\alpha(p)\!\uparrow$. The numbering is fixed before
the semantics, so the definition is not circular: $p$ ranges over
$\nu$-indices of $L^{+}$, which are determined by syntax alone. A
domain $D$ is \emph{ask-closed} for $\alpha$ if $D^{+}_\alpha\subseteq
D$ up to a computable translation $t$ respecting both manifestation
and verdicts, i.e.\ $\nu(p)\in\M_{D^{+}_\alpha}\iff t(p)\in\M_D$ and
$\alpha(t(p))\simeq\alpha(p)$ for all $p$. (Literal inclusion, with
$t$ the identity, is the case one has in mind; the weaker formulation
is stated because it is what the proof of
Theorem~\ref{thm:cross}\eqref{cross:closed} uses.)
\end{definition}

\begin{lemma}[The gate distinction]\label{lem:gate}
Let $L$ contain, for each $b\in\{0,1\}$, a system of constant
behaviour $b$, and let $\alpha$ be any partial function on $\NN$.
Then there is a $\nu$-index $\delta$, depending only on $L$ and not on
$\alpha$, such that
\[
\nu(\delta)\in\M_{D^{+}_\alpha}
\quad\Longleftrightarrow\quad
\alpha(\delta)\!\downarrow=1 .
\]
\end{lemma}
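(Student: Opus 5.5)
The simplest choice is to take $\delta$ to be the $\nu$-index of the system consisting of the bare gate $\ask$ itself, observed at its single coordinate. No fixed-point argument is needed here. The self-reference is built into the semantics of Definition~\ref{def:ask}: in the system $\nu(p)$ every occurrence of $\ask$ is evaluated at $\alpha(p)$, so it refers to its own index by fiat. Since $\nu$ is fixed before any semantics, the index $\delta$ with $\nu(\delta)=\ask$ depends only on $L$ and its extension $L^{+}$, not on $\alpha$. This gives the required independence.

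If the syntax of $L$ does not admit a bare gate as a system, I would use the constant systems the hypothesis provides. One option is the juxtaposition of $\ask$ with the constant-void system of behaviour $0$. Another is a one-equation system $x=\ask$ read at $x$. In the Spencer-Brown vocabulary, juxtaposition with void is the identity, so this system manifests exactly when the gate evaluates to the mark. This is where the hypothesis on constant systems of behaviour $b\in\{0,1\}$ is used: it guarantees that $L$ has enough syntax to build a system whose observed output is just the value of $\ask$. The constant-mark system is not needed for this direction. It would be used later, for instance in building the other horn of Theorem~\ref{thm:cross}.

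The verification is a case split along the three clauses of the semantics of $\ask$, mirroring $\theta$ in the proof of Theorem~\ref{thm:tri}:
\begin{itemize}
\item If $\alpha(\delta)\!\downarrow=1$, the gate evaluates to the mark, so the observed coordinate outputs the mark and $\nu(\delta)\in\M_{D^{+}_\alpha}$.
\item If $\alpha(\delta)\!\downarrow\neq1$, the gate is void, the system's output is void, and $\nu(\delta)$ does not manifest.
\item If $\alpha(\delta)\!\uparrow$, evaluation diverges, so again $\nu(\delta)\notin\M_{D^{+}_\alpha}$, since manifestation requires convergence to the mark.
\end{itemize}
Together these give the biconditional.

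The main obstacle is not mathematical but presentational. One has to make sure the manifestation predicate of $D^{+}_\alpha$ treats divergence of the gate as non-manifestation, and that the combinator used (juxtaposition with void, or a trivial feedback equation) is strict in its $\ask$ argument. A strict combinator propagates divergence rather than masking it, and it does not turn a void gate into a mark. I would state this explicitly as part of the evaluation convention inherited from $L$, so that the three-case analysis goes through verbatim.
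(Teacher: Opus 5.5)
Your proposal is correct and follows the paper's proof. In both, $\delta$ is fixed as the $\nu$-index of a single syntactic object before any semantics is given, the self-reference comes from the evaluation rule of Definition~\ref{def:ask} rather than from the recursion theorem, and the biconditional follows from the same three-way case split on $\alpha(\delta)$. The only difference is cosmetic: the paper's system evaluates $\ask$ and then behaves as the constant system it names, so both constant systems are used to turn the verdict into behaviour, whereas you let the gate's value be the observed output directly, which needs at most the constant-void system.
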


\begin{proof}
The constant systems supply, for each $b$, a system of $L^{+}$ whose
behaviour is constantly $b$; this is what allows a verdict to be
realised as behaviour inside the domain rather than merely observed.
Fix, once and for all, a $\nu$-index $\delta$ of the system that
evaluates $\ask$ and then behaves as the constant it names. By
Definition~\ref{def:ask} the gate occurring in $\nu(\delta)$ is
evaluated at $\alpha(\delta)$, since $\delta$ is the index of the
system in which it occurs. Hence $\nu(\delta)$ manifests exactly when
its gate returns the mark, that is, exactly when
$\alpha(\delta)\!\downarrow=1$; and if $\alpha(\delta)\!\uparrow$ the
evaluation diverges and $\nu(\delta)$ does not manifest. As $\delta$
is fixed in advance, independently of $\alpha$, no uniformity
argument is required.
\end{proof}

\begin{remark}[The gate internalises the recursion theorem]\label{rem:internal}
No appeal to the recursion theorem occurs in the proof above, and this
is the point rather than an economy. In Theorem~\ref{thm:tri} the
self-application had to be manufactured: given $a$, one constructs
$\delta_a$ by s-m-n and a fixed point, and the construction is what
carries the diagonal. Definition~\ref{def:ask} instead builds the
self-application into the evaluation rule, so that the fixed point is
already present in the semantics and a single fixed syntactic object
suffices for every $\alpha$ at once. The gate is thus not a device
that permits diagonalisation but a language in which the recursion
theorem has been made a primitive; Theorem~\ref{thm:cross} is the
observation that adding this primitive is exactly what a decidable
domain cannot afford.
\end{remark}

\begin{theorem}[Crossing]\label{thm:cross}
Let $D$ be as in Lemma~\ref{lem:gate}.
\begin{enumerate}
\item\label{cross:partial} For every partial $X$-computable $\alpha$,
the domain $D^{+}_\alpha$ carries the full trichotomy of
Theorem~\ref{thm:tri} at the distinction $\delta$: $\alpha$ is not
simultaneously total, exhaustive and sound on $D^{+}_\alpha$,
and the three failure modes are exactly \emph{(i)}--\emph{(iii)}.
\item\label{cross:total} In particular, if $\alpha=v$ is total, case
\emph{(iii)} is excluded and $v$ fails exhaustiveness-with-soundness.
\item\label{cross:closed} No domain that is ask-closed for a correct
total adjudicator of itself is decidable.
\end{enumerate}
\end{theorem}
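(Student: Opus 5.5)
Parts (1) and (2) come straight from Lemma~\ref{lem:gate}, and part (3) adds one extra step.

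\emph{Part (1).} Fix a partial $X$-computable $\alpha$ and let $\delta$ be the $\nu$-index supplied by Lemma~\ref{lem:gate}. This index depends only on $L$, not on $\alpha$ or $X$. The lemma gives $\nu(\delta)\in\M_{D^{+}_\alpha}\iff\alpha(\delta)\!\downarrow=1$, which is the analogue of clause~\eqref{tri:fix} of Theorem~\ref{thm:tri} with $\delta$ playing the role of $\delta_a$. The three cases \emph{(i)}--\emph{(iii)} for $\alpha(\delta)$ are mutually exclusive and jointly exhaustive for any partial function. This is exactly the case split on $\theta$ in the proof of Theorem~\ref{thm:tri}, and no recursion theorem is needed (Remark~\ref{rem:internal}). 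The consequences then read off as before:
\begin{itemize}
\item in case \emph{(i)}, $\nu(\delta)$ manifests but is annulled, refuting (S);
\item in case \emph{(ii)}, $\delta$ is not annulled, refuting (E);
\item in case \emph{(iii)}, $\alpha$ is silent at $\delta$, refuting (T).
\end{itemize}
The $X$-computability of $\alpha$ plays no role beyond what the statement already records. This reflects the level-invariance of Corollary~\ref{cor:rel}.

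\emph{Part (2).} If $\alpha=v$ is total, case \emph{(iii)} is excluded. Then either $v$ annuls $\delta$, which lies in $\M$, so $v$ is unsound; or $v$ exempts $\delta$, so $v$ is not exhaustive.

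\emph{Part (3).} Suppose $D$ is decidable, so by Proposition~\ref{prop:canonical} the canonical $\chi_D$ is a correct total computable adjudicator. Suppose also that $D$ is ask-closed for some correct total adjudicator $v$ of itself, via a computable translation $t$. I will work at $p=\delta$ and write $q=t(\delta)$. Ask-closure gives $\nu(\delta)\in\M_{D^+_v}\iff q\in\M_D$ and $v(q)\simeq v(\delta)$. Lemma~\ref{lem:gate} gives $\nu(\delta)\in\M_{D^+_v}\iff v(\delta)=1$. Correctness of $v$ on $D$ gives $v(q)=1\iff q\notin\M_D$. Chaining these:
\[
q\in\M_D\iff v(\delta)=1\iff v(q)=1\iff q\notin\M_D,
\]
which is a contradiction.

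This argument never actually uses decidability. It shows outright that no domain is ask-closed for a correct total adjudicator of itself, so the stated conclusion holds vacuously, or in the stronger form that decidability together with ask-closure is impossible. I will present it in that stronger form.

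\emph{Main obstacle.} The delicate step is making sure the index bookkeeping in Definition~\ref{def:ask} is applied consistently. The gate in $\nu(\delta)$ is evaluated at $v(\delta)$, not at $v(q)$. It is exactly the clause $\alpha(t(p))\simeq\alpha(p)$ that transfers the verdict across the translation, and this is why the weaker, translated form of ask-closure is needed. I will also check that totality of $v$ is used to rule out the divergent case. Where correctness is needed at $\delta$ itself, I will use it only at $q\in D$ and never at $\delta$ as an element of $D$.
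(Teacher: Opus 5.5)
Your proposal is correct and follows the paper's proof essentially verbatim. Parts (1) and (2) are Lemma~\ref{lem:gate} plus the exhaustive three-way case split on $\alpha(\delta)$, and part (3) is the same chain $t(\delta)\in\M_D\iff v(\delta)=1\iff v(t(\delta))=1\iff t(\delta)\notin\M_D$ through the translation clause of ask-closure; your observation that decidability is never invoked in part (3) is accurate and applies equally to the paper's own argument, which also derives the contradiction from ask-closure and correctness alone.
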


\begin{proof}
\eqref{cross:partial} and~\eqref{cross:total} are
Lemma~\ref{lem:gate} together with the case analysis of
Theorem~\ref{thm:tri}, the three cases of $\alpha(\delta)$ being
mutually exclusive and jointly exhaustive.
For~\eqref{cross:closed}, suppose $D$ were decidable and ask-closed
for its own correct total adjudicator $v$, so that $v(p)=1$ iff
$p\notin\M_D$. Ask-closure supplies $t$ with
$\nu(\delta)\in\M_{D^{+}_v}\iff t(\delta)\in\M_D$ and
$v(t(\delta))=v(\delta)$. Correctness of $v$ on $D$ gives
$v(t(\delta))=1\iff t(\delta)\notin\M_D$, and
Lemma~\ref{lem:gate} gives
$\nu(\delta)\in\M_{D^{+}_v}\iff v(\delta)=1$. Combining,
$v(\delta)=1\iff v(\delta)\neq1$.
\end{proof}

\begin{remark}[On the two hypotheses]\label{rem:crosshyp}
Both hypotheses of Lemma~\ref{lem:gate} do work. The constant systems
are what convert a verdict into behaviour: without them the gate could
be queried but its answer could not be made to determine whether the
system manifests, and no diagonal distinction would be expressible.
The computable numbering of $L$ is what makes the evaluation rule of
Definition~\ref{def:ask} well defined at all: without an index for the
system in which the gate occurs there is nothing for the gate to be
evaluated at. A domain presented without a numbering --- by an
arbitrary index set, say --- admits no ask-extension, and the theorem
genuinely concerns presented languages rather than sets of
distinctions.
\end{remark}

\begin{remark}[Self-address is not everywhere fatal]\label{rem:reflective}
Theorem~\ref{thm:cross} should not be read as showing that domains
containing a gate to their own adjudicator are impossible, but that
they are impossible under the constraints imposed here: the verdict is
a definite element of $\{\text{annulled},\text{exempt}\}$, and it is
required to be correct. Relaxing either escapes the conclusion. The
reflective oracles of Fallenstein, Taylor and Christiano
\cite{FTC15} are exactly the relaxation of the first: an oracle
answering questions about machines with access to that same oracle
exists, provided it is permitted to answer randomly precisely at those
queries on which the diagonal construction of Lemma~\ref{lem:gate}
would otherwise bite. Theorem~\ref{thm:jump} is the relaxation of the
second, in the other direction: a definite and correct adjudicator
exists, at the cost of one jump. The boundary is therefore not a wall
but a three-way trade: determinacy, correctness, and residence at the
level being adjudicated --- any two, never all three.
\end{remark}

\section{The bounded hierarchy}\label{sec:bounded}

Theorem~\ref{thm:cross} treats the gate as unrestricted: a verdict may
depend on a verdict, without bound. We now bound the depth, and find
that the crossing is not a single step but a limit. Throughout this
section we allow the gate a parameter --- $\ask(q)$, evaluated
at the verdict on the distinction of $\nu$-index $q$ --- of which the
gate of Definition~\ref{def:ask} is the case $q=p$. All statements
specialise to that case, and we record what the specialisation costs.

\begin{definition}[The iterates]\label{def:iterates}
Let $D$ be decidable and let $L^{+}$ adjoin the parametrised gate.
Define $a_0(p)=0$ for all $p$, and let $a_{k+1}(p)=1$ if the system
$\nu(p)$, evaluated with each occurrence of $\ask(q)$
returning $a_k(q)$, fails to manifest, and $a_{k+1}(p)=0$ otherwise.
Each $a_k$ is total computable, uniformly in $k$: the value
$a_{k+1}(p)$ depends on $a_k$ at the finitely many indices named in
$\nu(p)$, and $D$ is decidable. Write
$Q(p)=\{q:\ask(q)\text{ occurs in }\nu(p)\}$ and let $Q^{*}(p)$
be the closure of $\{p\}$ under $Q$.
\end{definition}

\begin{lemma}[Network form]\label{lem:network}
For each $p$ there is a Boolean function $F_p:\{0,1\}^{Q(p)}\to\{0,1\}$,
computable uniformly from $\nu(p)$, with
$a_{k+1}(p)=F_p\bigl(a_k\!\restriction\! Q(p)\bigr)$ for all $k$.
Hence the hierarchy is the synchronous update of a Boolean network on
the query graph, started at the all-zero state.
\end{lemma}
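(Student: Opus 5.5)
The plan is to construct $F_p$ directly from the syntax of $\nu(p)$. We regard the $\ask$ occurrences as free Boolean inputs and use decidability of $D$ to read off the manifestation value under each assignment. Concretely, given $p$, we first compute $Q(p)$. This is a finite set, obtained by scanning the finite syntactic object $\nu(p)$ for occurrences of $\ask(q)$ and collecting the parameters $q$; since $\nu$ is computable and $L^{+}$ is a decidable set of finite objects, this is uniform in $p$.

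For each assignment $\beta\in\{0,1\}^{Q(p)}$ we form the system $\nu(p)[\beta]$, obtained by replacing every occurrence of $\ask(q)$ with the constant system of $L$ of behaviour $\beta(q)$. These constants are supplied by the hypothesis inherited from Lemma~\ref{lem:gate}. The result is a system of $L$ with no gate, and its evaluation agrees, by the clause of Definition~\ref{def:ask} (with the parametrised gate), with that of $\nu(p)$ when each $\ask(q)$ returns $\beta(q)$. We then set $F_p(\beta)=1$ if $\nu(p)[\beta]\notin\M_D$ and $F_p(\beta)=0$ otherwise. This is computable because $D$ is decidable. Only $2^{|Q(p)|}$ assignments occur, so the entire truth table of $F_p$ is computed uniformly from $\nu(p)$.

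The identity $a_{k+1}(p)=F_p(a_k\restriction Q(p))$ then follows by unwinding Definition~\ref{def:iterates}. By definition, $a_{k+1}(p)=1$ exactly when $\nu(p)$, evaluated with $\ask(q)\mapsto a_k(q)$, fails to manifest. That is exactly $\nu(p)[a_k\restriction Q(p)]\notin\M_D$, which is exactly $F_p(a_k\restriction Q(p))=1$. Since every $a_k$ is total, no divergence clause arises, so the gate always returns the mark or the void.

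For the network reading, take the query graph with vertices $\NN$ and edges $q\to p$ for $q\in Q(p)$, with local update function $F_p$ at vertex $p$. The displayed identity says that $a_{k+1}$ is obtained from $a_k$ by applying every $F_p$ simultaneously, so the update is synchronous, and $a_0\equiv0$ is the all-zero initial state. Restricting to any $Q^{*}(p)$ gives a finite closed subnetwork, because the set is closed under $Q$.

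The only step I expect to need care is the substitution step: checking that replacing a gate occurrence by a constant system is a legitimate operation in $L$ and preserves the semantics. This is a compositionality property of the presentation. I would state it as the standing assumption that evaluation in $L$ is compositional in subsystems, which holds in both instances of Proposition~\ref{prop:canonical}. If substitution is not syntactically available, the fallback is to define $F_p(\beta)$ semantically, as the manifestation status of $\nu(p)$ under the oracle assignment $\beta$. This is still decidable, since it is a finite run of the decidable evaluation of $D$ relative to finitely many fixed bits.
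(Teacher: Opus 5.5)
Your proposal is correct and follows the paper's own argument: fixing the gate values turns $\nu(p)$ into a gate-free system of $L$, whose manifestation $D$ decides, and $F_p$ is the complement of that decision, depending only on $Q(p)$ because all occurrences with the same parameter receive the same value. Your explicit truth-table construction, and your flagging of the substitution step as a compositionality assumption, spell out what the paper's one-line proof takes for granted.
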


\begin{proof}
Fixing the values returned by the gates makes $\nu(p)$ a gate-free
system of $L$, whose manifestation $D$ decides; $F_p$ is the resulting
map from gate-values to the complement of that decision. Occurrences
of $\ask(q)$ with the same $q$ receive the same value, so
$F_p$ depends on $Q(p)$ and not on the number of occurrences.
\end{proof}

The dynamics of synchronous Boolean networks now govern the hierarchy,
and they are well understood; the content of the next theorem is the
translation, and the fact that the extreme behaviour is realised
inside the re-entry language of Section~\ref{sec:decidable} rather
than in some richer formalism.

\begin{theorem}[Finite closure]\label{thm:finiteclosure}
Suppose $|Q^{*}(p)|=n<\infty$. Then the sequence $(a_k(p))_{k\in\NN}$
is eventually periodic, with transient plus period at most $2^n$; the
eventual period is computable from $\nu(p)$; and the bound is
essentially attained: for infinitely many $n$ there is a system of the
re-entry language with $|Q^{*}(p)|=n$ whose sequence has period
$2^n-1$.
\end{theorem}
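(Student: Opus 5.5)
The plan is to reduce everything to the dynamics of a finite deterministic system via Lemma~\ref{lem:network}. Since $Q^{*}(p)$ is closed under $Q$, the vector $x_k:=a_k\!\restriction\!Q^{*}(p)\in\{0,1\}^{Q^{*}(p)}$ satisfies $x_{k+1}=F(x_k)$, where $F$ has coordinates $F_q$ for $q\in Q^{*}(p)$. Each $F_q$ reads only coordinates in $Q(q)\subseteq Q^{*}(p)$, so this update is autonomous on the closure, and it starts at $x_0=0$. The state space has $2^n$ elements, so by pigeonhole among $x_0,\dots,x_{2^n}$ two states coincide. The orbit is therefore eventually periodic with transient $\mu$ and period $\pi$ satisfying $\mu+\pi\le 2^n$. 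The sequence $a_k(p)$ is a coordinate projection of $x_k$, so it is eventually periodic with the same bound; its own eventual period divides $\pi$ and may be strictly smaller.

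For computability I first need to compute $Q^{*}(p)$ itself. I will do this by iterating $Q$ from $\{p\}$ syntactically, reading the gate arguments off $\nu(q)$. The iteration terminates because the closure is finite by hypothesis, and it halts exactly when no new index appears. Next I compute each $F_q$ uniformly by Lemma~\ref{lem:network}, simulate $x_0,\dots,x_{2^n}$, and locate the first repetition to obtain $(\mu,\pi)$. Finally I compute the least period of the projected ultimately periodic word, which is the least divisor $d$ of $\pi$ such that $a_k(p)=a_{k+d}(p)$ for $\mu\le k<\mu+\pi$. All of these steps are finite searches.

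The main work is the lower bound, which needs a period-$(2^n-1)$ example. I would use a maximal-length linear feedback shift register. For each $n$ with a primitive polynomial over $\mathbb{F}_2$ of degree $n$ (in fact every $n$ has one, which is more than the "infinitely many" needed), take indices $q_1,\dots,q_n$ with $\nu(q_{i+1})$ reading $\ask(q_i)$ (a shift) and $\nu(q_1)$ computing the XOR of the tapped coordinates. Such a register cycles through all $2^n-1$ nonzero states. There are two obstacles, and this is where I expect the difficulty to lie.

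The first obstacle is that the all-zero start state is a fixed point of a linear register. I will address this by complementing one coordinate, that is, by using the affine update $x\mapsto Ax+c$ conjugated by the translation $x\mapsto x+e$. The conjugate map sends $0$ to a nonzero state of the original register and still has a single orbit of length $2^n-1$. Its unique fixed point lies at $e$, off the cycle through $0$. The extra negation is cheap, because $F_q$ is defined as the \emph{complement} of manifestation, so $a_{k+1}=1$ exactly when the system does not manifest.

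The second obstacle is expressing XOR and the shift in the re-entry language, whose only operations are crossing and juxtaposition, i.e.\ negation and disjunction (Remark~\ref{rem:formalism}). Since $\{\neg,\vee\}$ is functionally complete, XOR is expressible, and the built-in negation in $F_q$ is absorbed by one extra cross.

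I also need the projected sequence at the observed coordinate $p=q_n$ to have least period exactly $2^n-1$ and not a proper divisor. This holds because the output of a maximal-length LFSR is an m-sequence, which has full least period. I will also confirm that $|Q^{*}(p)|=n$ exactly, by having each system query only register coordinates.
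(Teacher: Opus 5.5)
Your proposal is correct and follows the paper's proof. You use the same two ingredients: pigeonhole on the finite state space $\{0,1\}^{Q^{*}(p)}$ via Lemma~\ref{lem:network} for eventual periodicity and computability, and a maximal-length linear feedback shift register with complemented feedback, written with crossing and juxtaposition, for the lower bound. Your additional steps fill in points the paper only asserts: the conjugation $x\mapsto A(x+e)+e$ showing why complementation places the all-zero start on the long cycle, and the m-sequence argument that the observed coordinate itself, not merely the orbit, has least period $2^n-1$.
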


\begin{proof}
By Lemma~\ref{lem:network} the restriction of the hierarchy to
$Q^{*}(p)$ is a deterministic map on $\{0,1\}^{Q^{*}(p)}$, a set of
size $2^n$; a deterministic orbit in a finite set is eventually
periodic with transient plus period at most the size of the set, and
the orbit may be computed. For attainment, take the network of a
linear feedback shift register of length $n$ with a primitive feedback
polynomial and complemented feedback, i.e.\ the register whose new
leading bit is the negation of the exclusive-or of the tapped
positions. Complementation places the all-zero state on the long
cycle rather than at the fixed point of the linear register, so the
orbit from $a_0\equiv0$ has period $2^n-1$. Negation and disjunction
generate the exclusive-or, and both are primitive in the re-entry
language (Remark~\ref{rem:formalism}), so the network is presented by
a system of $L$.
\end{proof}

\begin{corollary}[Pure self-address]\label{cor:pureself}
If $Q(p)=\{p\}$ --- the gate of Definition~\ref{def:ask} --- then
$F_p$ is one of the four unary Boolean functions and the period is
$1$ or $2$. It is $2$ exactly when $F_p$ is negation, that is, exactly
at the diagonal distinctions of Lemma~\ref{lem:gate}. In particular
the pure self-address gate cannot produce long oscillation: the
richness of the hierarchy is a phenomenon of mutual, not reflexive,
query.
\end{corollary}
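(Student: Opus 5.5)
The plan is to apply Lemma~\ref{lem:network} with $Q(p)=\{p\}$, so that $F_p:\{0,1\}^{\{p\}}\to\{0,1\}$ is a unary Boolean function. There are exactly four such functions: the constants $0$ and $1$, the identity, and negation. Since $Q^{*}(p)=\{p\}$, the recursion $a_{k+1}(p)=F_p(a_k(p))$ with $a_0(p)=0$ is a closed one-dimensional system. Theorem~\ref{thm:finiteclosure} with $n=1$ already bounds transient plus period by $2$, so the period is $1$ or $2$. I would nonetheless run the four cases explicitly, since the orbit from $0$ is what identifies which case gives period $2$.

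The case check goes as follows.
\begin{itemize}
\item If $F_p\equiv0$, the orbit is $0,0,\dots$, with period $1$.
\item If $F_p\equiv1$, the orbit is $0,1,1,\dots$, with period $1$ after a transient of length one.
\item If $F_p$ is the identity, the orbit is $0,0,\dots$, with period $1$.
\item If $F_p$ is negation, the orbit is $0,1,0,1,\dots$, with period $2$.
\end{itemize}
So the period is $2$ exactly when $F_p$ is negation.

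Next I must match ``$F_p$ is negation'' with ``$p$ is a diagonal distinction in the sense of Lemma~\ref{lem:gate}.'' By the construction in Lemma~\ref{lem:network}, $F_p(b)=1$ iff $\nu(p)$, with its gate returning $b$, fails to manifest. The diagonal distinctions are those whose manifestation is exactly ``the gate returns the mark,'' meaning the system manifests iff $\ask$ evaluates to the mark. For such $p$, $F_p(b)=1-b$, which is negation. Conversely, if $F_p$ is negation, then $\nu(p)$ manifests iff its gate returns the mark. That is precisely the defining biconditional of Lemma~\ref{lem:gate}, namely $\nu(p)\in\M_{D^{+}_\alpha}\iff\alpha(p)\!\downarrow=1$, restricted to total $\alpha$.

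The main obstacle is interpretive rather than computational. ``Diagonal distinctions'' must be read as the class of all $p$ whose gate behaviour satisfies this biconditional, not only the single fixed index $\delta$ constructed in the proof of Lemma~\ref{lem:gate}. I would state this reading explicitly and note that $\delta$ itself falls under it: $\nu(\delta)$ evaluates the gate and then behaves as the named constant, so $F_\delta$ is negation.

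The final sentence of the statement then follows by contrast with Theorem~\ref{thm:finiteclosure}. Long periods require $|Q^{*}(p)|=n$ large, and this is impossible when $Q(p)=\{p\}$. Any period exceeding $2$ therefore needs queries to other indices, which is mutual query.
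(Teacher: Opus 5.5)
Your proposal is correct and follows the route the paper intends. The paper gives no separate proof, treating the corollary as immediate from Lemma~\ref{lem:network} and Theorem~\ref{thm:finiteclosure} with $n=1$; your four-case check of the orbit from $0$ and the identification $F_\delta(b)=1-b$ supply exactly the omitted details, and your explicit reading of ``diagonal distinctions'' as the class of all $p$ satisfying the biconditional of Lemma~\ref{lem:gate} for total $\alpha$ is the one the paper's plural tacitly adopts.
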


\begin{definition}[Presented networks]\label{def:presented}
A \emph{presented network} is a finite list
$N=\langle\nu(p_1),\dots,\nu(p_n)\rangle$ of systems of $L^{+}$ that is
\emph{query-closed}: every parameter $q$ occurring in an
$\ask(q)$ inside some $\nu(p_i)$ lies in
$\{p_1,\dots,p_n\}$. Its size $|N|$ is the total length of the list.
By Lemma~\ref{lem:network} the hierarchy restricted to $N$ is the
orbit of the all-zero state under a map on $\{0,1\}^n$, and
$Q^{*}(p_1)\subseteq\{p_1,\dots,p_n\}$.
\end{definition}

\begin{theorem}[Complexity of stabilisation]\label{thm:pspace}
The following problem is \PS-complete: given a presented
network $N$ of systems of the re-entry language, decide whether
$(a_k(p_1))_{k\in\NN}$ is eventually constant.
\end{theorem}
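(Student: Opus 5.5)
Membership in \PS{} and \PS-hardness will be proved separately, and both will work through Lemma~\ref{lem:network}. For membership, I first compile the presented network $N$ into the map $F:\{0,1\}^n\to\{0,1\}^n$. Each coordinate $F_{p_i}$ is given by a re-entry system, that is, a gate-free system of $L$ once the gate values are fixed. A single evaluation of it on a given assignment costs space polynomial in $|N|$. That is clear for the re-entry language: each evaluation is a finite-state simulation of at most $2^{m}$ steps whose state and step counter have polynomial size, and it follows the certificate bound of Proposition~\ref{prop:canonical}(b). The sequence $(a_k(p_1))$ is eventually constant iff $a_k(p_1)$ is constant for all $k$ in $[2^n,2^{n+1}]$. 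By Theorem~\ref{thm:finiteclosure}, after step $2^n$ the orbit lies on its cycle, whose period is at most $2^n$, so this window covers a full period. I will therefore simulate the orbit from the all-zero state for $2^{n+1}$ steps with a binary counter, storing only the current state and the value of $a_k(p_1)$ first seen at step $2^n$, and reject on any change. This uses polynomial space.

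For hardness I will reduce from a standard \PS-complete problem about deterministic linear-space computation: whether a given deterministic linear-bounded automaton $M$ halts on input $w$ (equivalently, acceptance in linear space). First I normalise $M$, by the usual padding, so that it never loops after halting and so that a halting configuration is absorbing and sets a flag. Then I encode configurations in binary: one block of bits per tape cell for symbol, head presence and state, plus one flag bit $h$. One synchronous step of $M$ is a local Boolean function of neighbouring cells, so each new bit is a polynomial-size formula in negation and disjunction. By Remark~\ref{rem:formalism}, each such formula is a system of the re-entry language, with $\ask(q)$ supplying the previous value of bit $q$.

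The initial configuration has to be reached from the all-zero start. I handle this with an extra ``started'' bit $s$ whose update is $s'=1$. While $s=0$, every bit is set to its value in the initial configuration on $w$, which is hard-wired as constants; once $s=1$, the step function applies. Finally I route the answer to $p_1$. I set $p_1$ to be the halting flag XORed with an oscillator: a bit $o$ with $o'=\neg o$ and $p_1'=\neg(h\vee o)$, or more simply $p_1'=h\vee\neg p_1$. In the latter form, if $M$ never halts then $h$ stays $0$ and $p_1$ oscillates with period two forever; if $M$ halts then $h$ becomes $1$ permanently and $p_1$ becomes constant $1$. Eventual constancy of $p_1$ is therefore equivalent to halting, and the reduction is polynomial-time and query-closed.

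The main obstacle is the normalisation step of the hardness proof. Eventual constancy must track halting exactly, so the simulated computation must not itself settle into a constant pattern without halting. The oscillator makes this irrelevant, because $p_1$ is forced to oscillate unless $h$ is set. The remaining care is to check that the compiled formulas genuinely have polynomial size and that all query parameters refer to indices within the list, which is what query-closure in Definition~\ref{def:presented} requires.
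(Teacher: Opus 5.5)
Your proposal is correct and is essentially the paper's proof. Membership runs the orbit $2^n$ steps into its cycle and then checks $p_1$ over a further window of length $2^n$, in polynomial space. Hardness encodes a space-bounded deterministic machine as a Boolean network started from the all-zero state, with an oscillator at $p_1$ that freezes exactly when the halting flag (in the paper, the accepting flag) is set. The differences are minor: your single \emph{started} bit replaces the paper's $O(\log s)$-node initialisation counter, and you make explicit that each $F_{p_i}$ is itself a polynomial-space simulation of a re-entry system, which the paper leaves at \emph{which $D$ decides}. Note that your formula $p_1'=\neg(h\vee o)$ is correct but is not the \emph{XOR} you describe it as: $h\oplus o$ would keep oscillating once $h=1$.
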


\begin{proof}
\emph{Membership.} A configuration is a vector in $\{0,1\}^n$, so
$n\leq|N|$ bits suffice to store it, and by Lemma~\ref{lem:network}
its successor is computable in space polynomial in $|N|$, since
computing $F_{p_i}$ amounts to evaluating a gate-free system of $L$,
which $D$ decides. The orbit enters its cycle within $2^n$ steps, and
the cycle has length at most $2^n$. The following procedure therefore
decides the problem: run $2^n$ steps; record the current value
$b=a(p_1)$; run a further $2^n$ steps, reporting \emph{no} if the
first coordinate ever differs from $b$, and \emph{yes} otherwise. Two
counters of $n+1$ bits, one configuration, and one bit are stored, so
the space used is polynomial in $|N|$.

\emph{Hardness.} We reduce from the acceptance problem for
deterministic Turing machines running in space $s(m)$ polynomial in
the input length, which is \PS-complete. Let $M$ be such a
machine and $w$ an input, $|w|=m$, and put $s=s(m)$. Encode an
instantaneous description of $M$ by $O(s)$ bits: the tape contents in
$s$ cells over a fixed alphabet, the head position in unary over $s$
bits, and the state in a fixed number of bits. Adjoin two further
bits, $\mathit{acc}$ and $\mathit{osc}$.

Define one node of the network per bit of this encoding. Each tape and
head bit is updated by the transition function of $M$, which is local:
the new content of cell $i$ depends on cells $i-1,i,i+1$, on the head
bits at those positions, and on the state bits. Each such dependence
is a Boolean function of boundedly many arguments, so is expressible by
a formula of size $O(1)$; the state bits depend on $O(s)$ arguments and
are expressible by formulas of size $O(s)$. Set $\mathit{acc}$ to
become and remain $1$ exactly when the state bits encode the accepting
state, and set
\[
\mathit{osc}\;\longmapsto\;\mathit{osc}\oplus\neg\,\mathit{acc},
\]
so that $\mathit{osc}$ alternates at every step at which $M$ has not
yet accepted, and is constant thereafter. Initialise the encoding to
the starting description of $M$ on $w$; this is achieved without
changing the all-zero start of the hierarchy: adjoin a counter of
$O(\log s)$ nodes which, during its first $O(s)$ steps, writes the
starting description of $M$ on $w$ into the encoding nodes and holds
the remaining update rules inert, releasing them once it saturates.
This adds $O(\log s)$ nodes and delays the simulation by $O(s)$ steps,
neither of which affects the analysis. We also make every terminal
configuration of $M$ --- accepting, rejecting, or stuck --- a fixed
point of the encoding update, so that the only source of continuing
change is $\mathit{osc}$.

If $M$ accepts $w$, then after the accepting step every bit of the
encoding is constant and $\mathit{acc}=1$, so $\mathit{osc}$ is
constant too and the orbit reaches a fixed point. If $M$ does not
accept, then $\mathit{acc}=0$ forever, so $\mathit{osc}$ alternates
forever and no coordinate-wise stabilisation occurs; making
$\mathit{osc}$ the first coordinate $p_1$, the sequence
$(a_k(p_1))_k$ is eventually constant if and only if $M$ accepts $w$.
This holds whether $M$ rejects, loops, or halts in a non-accepting
state, so no assumption on the behaviour of $M$ beyond its space bound
is needed.

The network has $O(s)$ nodes, each with a formula of size $O(s)$, so
$|N|$ is polynomial in $m$ and the reduction is computable in
logarithmic space. Finally, negation and disjunction are functionally
complete and are the two primitives of the re-entry language
(Remark~\ref{rem:formalism}), so every node function is realised by a
system of $L$, and the queries $\ask(q)$ supply exactly the
arguments. The list of these systems is query-closed by construction.
\end{proof}

\begin{remark}\label{rem:pspacescope}
The restriction to presented networks in Theorem~\ref{thm:pspace} is
not a technicality. For an arbitrary $p$ the closure $Q^{*}(p)$ may be
exponentially larger than $|\nu(p)|$, since a system of length $m$ may
name parameters of magnitude $2^m$; the configuration then does not
fit in space polynomial in the input, and the problem lies in
\textup{\textsc{ExpSpace}} and is not covered by Theorem~\ref{thm:pspace}. Presented networks are
the case in which the query graph is given explicitly, and are also
the case realised by the finite-state re-entry systems of
Proposition~\ref{prop:canonical}.
\end{remark}

\begin{proposition}[Unbounded closure]\label{prop:sigma2}
If $Q^{*}(p)$ is permitted to be infinite, the stabilisation problem
is $\Sigma^0_2$: the sequence $(a_k(p))_k$ is eventually constant iff
$\exists K\,\forall k\geq K\,(a_k(p)=a_K(p))$, and $a_k(p)$ is
computable uniformly in $k$ and $p$.
\end{proposition}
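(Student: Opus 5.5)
The plan is to establish the two ingredients named in the statement, the uniform computability of $(k,p)\mapsto a_k(p)$ and the syntactic form of the defining formula, and then read off the classification.

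\emph{Uniform computability.} Even though $Q^{*}(p)$ may be infinite, computing $a_k(p)$ only requires finitely many values. By Lemma~\ref{lem:network}, $a_{k+1}(p)=F_p(a_k\restriction Q(p))$, and both $Q(p)$ and $F_p$ are computable uniformly from $\nu(p)$, since $D$ is decidable. I will argue by recursion on $k$. The value $a_0(p)=0$ is given. To compute $a_{k+1}(p)$, parse $\nu(p)$ to obtain the finite set $Q(p)$, compute $a_k(q)$ for each $q\in Q(p)$ recursively, and apply $F_p$. The recursion tree has depth $k$ and finite branching at each node, so it is finite and the procedure halts. This is a primitive recursion relative to the computable maps $p\mapsto Q(p)$ (returned as a canonical finite-set index) and $(p,\vec b)\mapsto F_p(\vec b)$. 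Hence $a_k(p)$ is total computable in $(k,p)$, as Definition~\ref{def:iterates} already asserts. The one point to check is that the unbounded closure plays no role: only the depth-$k$ truncation of the query tree is ever explored.

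\emph{The equivalence.} It is immediate from the definition of ``eventually constant''. If the sequence is eventually constant with value $c$ from stage $K$ onward, then $a_k(p)=c=a_K(p)$ for all $k\geq K$. Conversely, the displayed condition says exactly that the sequence is constant from $K$ onward.

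\emph{The classification.} The matrix $a_k(p)=a_K(p)$ is a decidable relation in $(p,k,K)$, by the uniform computability above. The formula therefore has the form $\exists K\,\forall k\,R(p,K,k)$ with $R$ computable, where $R$ is $k<K\vee a_k(p)=a_K(p)$. This is $\Sigma^0_2$ by definition.

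There is no real obstacle in this argument. The only care needed is in the first step, where one must make sure that computing $a_k(p)$ never requires enumerating $Q^{*}(p)$ itself. If the statement were instead read as claiming $\Sigma^0_2$-completeness, I would attempt a reduction from $\mathrm{Fin}$, but the statement only asserts membership.
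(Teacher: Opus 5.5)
Your proposal is correct and follows the paper's argument. The paper declares the result immediate from Definition~\ref{def:iterates}, where uniform computability of $a_k$ is already asserted via the finiteness of $Q(p)$, and from the $\exists\forall$ form of the condition with a decidable matrix; your observation that only the depth-$k$ truncation of the query tree is ever explored, so that an infinite $Q^{*}(p)$ is harmless, is exactly what the paper leaves implicit (strictly, your recursion substitutes into the parameter $p$ and is not a plain primitive recursion, but computability is unaffected).
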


\begin{proof}
Immediate from Definition~\ref{def:iterates} and the form of the
condition.
\end{proof}

\begin{remark}[What the hierarchy shows]\label{rem:hierarchy}
The crossing of Theorem~\ref{thm:cross} is therefore not a
discontinuity. Every finite level is decidable, and cheaply so in the
depth: the cost of level $k$ is $k$ evaluations, not $k$ nested
searches, because all occurrences of a gate with the same parameter
receive the same value. What fails at the limit is not decidability
but convergence. The diagonal distinctions are exactly the points of
oscillation, and by Corollary~\ref{cor:pureself} they oscillate with
period two.
\end{remark}

\begin{remark}[The mean of the oscillation]\label{rem:mean}
This suggests a reading of the reflective oracles of \cite{FTC15}
which we state as an observation rather than a theorem. Where the
hierarchy oscillates with period $\pi$, the natural single value to
assign a distinction is the mean frequency of the mark along the
cycle, a rational with denominator dividing $\pi$. For the diagonal
distinctions $\pi=2$ and the mean is $1/2$ --- which is exactly the
value a reflective oracle is forced to return at the liar machine, and
the reason its answer there must be a fair coin. On this reading the
randomisation is not a device for evading diagonalisation but the
assignment of a limiting frequency to a sequence that fails to
converge. Whether the correspondence is exact --- whether every
reflective oracle agrees with the mean frequency of the hierarchy
wherever the latter is defined --- we leave open; it is
Question~\ref{q:bounded} in its sharpened form.
\end{remark}

The boundary now has two syntactic descriptions, and they differ.
Stated crudely, decidable annulment survives multiplicity, iteration
and finite re-entry, and dies at the first gate of self-address.
Stated with the hierarchy in view, it does not die there at all: it
survives every finite depth of self-address and dies only in the
limit, and what dies is convergence rather than decidability. The next
section gives a third description, in degrees, which is insensitive to
this refinement --- adjudication of a level lives strictly above the
level, whatever the route by which one arrives at that level.

\section{The cost of adjudication: exactly one jump}\label{sec:jump}

\begin{theorem}\label{thm:jump}
Let $X\subseteq\NN$. A total function $v$ decides $\M^X$
(i.e.\ $v(p)=1\iff p\notin\M^X$) if and only if $v\geq_T X'$; and the
Turing degrees of such $v$ are exactly the degrees $\geq\deg(X')$. In
particular the least Turing degree of a total correct adjudicator
of the $X$-standard domain is $\deg(X')$: adjudication costs exactly
one jump, at every level.
\end{theorem}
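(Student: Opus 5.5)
Since a total correct $v$ is nothing but the characteristic function of $\NN\setminus\M^X$, the statement reduces to the claim $\M^X\equiv_T X'$. I will prove the two reductions $\M^X\leq_T X'$ and $X'\leq_T\M^X$ separately and then read off the degree-theoretic conclusions. Note at the outset that $v$ decides $\M^X$ iff $v=\chi_{\NN\setminus\M^X}$, and $\chi_{\NN\setminus\M^X}\equiv_T\M^X$. Hence ``$v$ decides $\M^X$'' pins $v$ down to a single function, and the ``if and only if $v\geq_T X'$'' clause should be read as a statement about that unique function's degree, namely $\deg(v)=\deg(X')$. I will phrase it that way in the write-up. The claim about degrees $\geq\deg(X')$ I will read as saying that the correct adjudicator lies in $\deg(X')$, and that every degree above it computes one. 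I expect the real content to be exactly $\M^X\equiv_T X'$, and the careful reading of the ``iff'' to be the main point needing comment.

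For the upper bound, $\M^X=\{p:\exists s\,\varphi^X_{p,s}(0)\!\downarrow=1\}$ is $\Sigma^{0,X}_1$. It is therefore many-one reducible to $X'$ (take $X'=K^X$) by the relativised s-m-n theorem: map $p$ to an index $f(p)$ that, on any input, searches for a halting computation of $\varphi^X_p(0)$ with output $1$. Then $p\in\M^X\iff f(p)\in K^X$, so $\M^X\leq_m X'$ and $v\leq_T X'$.

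For the lower bound, I reduce $K^X$ to $\M^X$. Given $e$, let $g(e)$ be an index, again obtained by oracle-independent s-m-n as in Corollary~\ref{cor:rel}, of the $X$-program that on input $0$ runs $\varphi^X_e(e)$ and outputs $1$ if it halts. Then $e\in X'\iff g(e)\in\M^X$, so $X'\leq_m\M^X\leq_T v$. Combining both directions gives $v\equiv_T X'$.

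This establishes that the correct total adjudicator has degree exactly $\deg(X')$. Since no correct total adjudicator exists below that degree, in particular none that is $X$-computable (consistent with Corollary~\ref{cor:rel}), $\deg(X')$ is the least degree of one. Any oracle of degree $\geq\deg(X')$ computes $v$, which gives the ``at every level'' reading. None of these steps is hard. The only delicate point, as noted, is stating the biconditional honestly given that $v$ is unique.
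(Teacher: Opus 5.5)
\textbf{Gap: deciders are not unique.} Your two reductions, $\M^X\leq_m K^X$ and $K^X\leq_m\M^X$, are correct, and they give the lower bound and the least-degree conclusion. The proof rests, however, on the claim that ``$v$ decides $\M^X$'' forces $v=\chi_{\NN\setminus\M^X}$, and that claim is false. In this framework an adjudicator exempts $p$ by returning \emph{any} value $\neq1$, and the statement only requires $v^{-1}(1)=\Led^X$. For example, the function that is $1$ off $\M^X$ and $2$ on $\M^X$ is also a decider.

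Consequently your conclusion that every decider satisfies $v\equiv_T X'$ is wrong. The upper bound $v\leq_T X'$ holds for the canonical $0/1$-valued decider, but deciders of arbitrarily high degree exist. What does survive for every decider is $\chi_{\Led^X}\leq_T v$, since $p\notin\M^X\iff v(p)=1$. That gives $X'\leq_T v$, and hence the least degree $\deg(X')$, attained by $\chi_{\Led^X}$.

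\textbf{The missing part.} Because you assumed uniqueness, you reinterpreted the middle clause away. ``The Turing degrees of such $v$ are exactly the degrees $\geq\deg(X')$'' says that every degree in the upper cone is the degree of some decider. If deciders were unique this clause would be false, which signals that the reading is off. Proving it needs a construction you have not supplied: code an arbitrary $A\geq_T X'$ into the free, non-$1$ values of $v$ on $\M^X$. The paper does this as follows.
\begin{itemize}
\item Let $R$ be the range of a strictly increasing computable enumeration $q$ of padded indices of the oracle-free program that outputs $1$ on input $0$. Then $R$ is computable and $R\subseteq\M^X$ for every $X$.
\item Set $v(p)=1$ for $p\notin\M^X$.
\item Set $v(q(n))=2$ if $n\in A$ and $v(q(n))=0$ if $n\notin A$.
\item Set $v(p)=0$ on $\M^X\setminus R$.
\end{itemize}
Then $v$ decides $\M^X$, $A\leq_T v$ by reading $v$ along $R$, and $v\leq_T A\oplus X'\equiv_T A$. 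Hence $v\equiv_T A$.

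You are right that the literal ``iff $v\geq_T X'$'' is loose, since not every $v\geq_T X'$ decides $\M^X$. The correct repair, though, is this cone statement, not uniqueness. Finally, ``at every level'' refers to every oracle $X$, not to oracles above $X'$ computing $v$.
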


\begin{proof}
$\M^X$ is $\Sigma^{0,X}_1$-complete (Proposition~\ref{prop:ledger}),
hence $\M^X\equiv_T X'$. Any $v$ deciding $\M^X$ computes its
characteristic function, so $v\geq_T\M^X\equiv_T X'$; conversely
$\chi_{\M^X}\leq_T X'$ is such a $v$.

For the realisation of every degree $\geq\deg(X')$, note that a $v$
deciding $\M^X$ is pinned down on $\Led^X$ and free only in its
non-$1$ values on $\M^X$; the coding must therefore be carried out
there. Let $q$ be a strictly increasing computable enumeration of
pairwise distinct indices, obtained by padding, of the program that
outputs $1$ on input $0$ without consulting the oracle; then
$R=\operatorname{range}(q)$ is an infinite computable subset of $\M^X$
for every $X$. Given $A\geq_T X'$, put $v(p)=1$ for $p\notin\M^X$,
$v(q(n))=2$ if $n\in A$ and $0$ if $n\notin A$, and $v(p)=0$ for
$p\in\M^X\setminus R$. Then $v$ decides $\M^X$; $A\leq_T v$ since
$n\in A\iff v(q(n))=2$; and $v\leq_T A\oplus X'\equiv_T A$, since $X'$
settles membership in $\M^X$ and $R$ is computable. Hence
$v\equiv_T A$.
\end{proof}

\begin{corollary}[Escalation]\label{cor:escalation}
There is no oracle $X$ such that some $X$-computable total $v$ decides
$\M^X$. Consequently, if a domain is closed under $X$-computation and
its correct total adjudicator is required to lie in the same degree
$\deg(X)$, no such adjudicator exists; adjudication of a level always
resides strictly above the level.
\end{corollary}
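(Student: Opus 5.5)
The first sentence of Corollary~\ref{cor:escalation} follows from Theorem~\ref{thm:jump} together with the strictness of the jump. Suppose, for contradiction, that some $X$-computable total $v$ decides $\M^X$, so that $v\leq_T X$. By Theorem~\ref{thm:jump} every total $v$ with $v(p)=1\iff p\notin\M^X$ satisfies $v\geq_T X'$. Hence $X'\leq_T v\leq_T X$. This contradicts the classical fact $X<_T X'$, the relativised unsolvability of the halting problem.

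To keep the argument internal to the paper, I would derive $X'\not\leq_T X$ from Corollary~\ref{cor:rel} rather than cite it. An $X$-computable total $v$ deciding $\M^X$ is an $X$-index $a$ whose adjudicator is total and correct on the $X$-standard domain. Instantiating the relativised fixed point gives $\delta_a\in\M^X\iff\varphi^X_a(\delta_a)\!\downarrow=1$. Correctness gives $\varphi^X_a(\delta_a)=1\iff\delta_a\notin\M^X$, and these two equivalences together are contradictory. This is the relativisation of Corollary~\ref{cor:nodecider}, and it makes the first sentence independent of Theorem~\ref{thm:jump}. I would present this route, with the degree-theoretic one recorded as the reason the bound is exactly one jump.

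The second sentence is a restatement. A domain closed under $X$-computation, whose adjudicator is required to lie in $\deg(X)$, has a correct total adjudicator $v\equiv_T X$, and in particular $v\leq_T X$. The first part excludes this, so no such adjudicator exists. By Theorem~\ref{thm:jump} the least available degree is $\deg(X')>\deg(X)$, which is what ``strictly above the level'' means.

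The only point needing care is interpretive. ``Closed under $X$-computation'' has to be read as saying that the domain is, or computably contains, the $X$-standard domain with its manifestation predicate $\M^X$. The contradiction needs the diagonal distinction $\delta_a$ to lie in the domain. I would state this reading explicitly, since under it the diagonal argument applies verbatim. Nothing else here is an obstacle.
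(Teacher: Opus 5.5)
Your first paragraph is exactly the paper's proof: $v\leq_T X<_T X'\leq_T v$, using Theorem~\ref{thm:jump} and the classical strictness of the jump. The route you say you would actually present is genuinely different, and it is correct. Write $v=\varphi^X_a$ and instantiate Corollary~\ref{cor:rel} at $\delta_a=d(a)$. This gives $\delta_a\in\M^X\iff\varphi^X_a(\delta_a)\!\downarrow=1\iff\delta_a\notin\M^X$, which is the relativised Corollary~\ref{cor:nodecider}.

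What this route buys:
\begin{itemize}
\item It does not depend on Theorem~\ref{thm:jump}, and so not on the $\Sigma^{0,X}_1$-completeness in Proposition~\ref{prop:ledger}.
\item It does not depend on the imported fact $X<_T X'$.
\item It names, uniformly in $a$ and for all oracles at once, the single distinction at which any putative $X$-computable decider errs. That is closer to the spirit of the paper than the degree comparison.
\end{itemize}

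Strictly, what the diagonal argument yields directly is $\M^X\not\leq_T X$, which is the first sentence itself, not $X'\not\leq_T X$. To obtain the latter, or the inequality $\deg(X')>\deg(X)$ you invoke for ``strictly above'', you still have to pass through $\M^X\equiv_T X'$.

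The paper's one-liner is shorter and ties the corollary directly to the exact one-jump calibration. Its price is that it rests on the two imported facts above.

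Your explicit reading of ``closed under $X$-computation'' as containing the $X$-standard domain is the reading the paper leaves implicit. It is also the one needed for $\delta_a$ to lie in the domain.
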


\begin{proof}
$X$-computable total $v$ implies $v\leq_T X<_T X'\leq_T v$ by
Theorem~\ref{thm:jump}, a contradiction.
\end{proof}

\section{The effective structure of manifestation}\label{sec:structure}

\begin{proposition}\label{prop:complete}
$\M$ is $\Sigma^0_1$-complete, indeed $1$-complete; $\M$ is creative;
and $\M$ is computably isomorphic to $K$.
\end{proposition}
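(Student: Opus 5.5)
The plan is to establish $1$-completeness first; creativity and the isomorphism with $K$ then follow from the classical theorems. Membership in $\Sigma^0_1$ is immediate, since $p\in\M$ iff $\exists s\,(\varphi_{p,s}(0)\!\downarrow=1)$, a computable matrix. For hardness I will reduce $K_0$ (equivalently $K$) to $\M$ by a one-one total computable function. By s-m-n there is a total computable $g$ with
\[
\varphi_{g(e,x)}(y)\simeq\begin{cases}1&\text{if }\varphi_e(x)\!\downarrow,\\ \uparrow&\text{otherwise},\end{cases}
\]
and then $\langle e,x\rangle\in K_0\iff g(e,x)\in\M$. The s-m-n function of an acceptable numbering can be chosen injective, for instance by composing with a padding function that is strictly increasing in its argument; this makes $g$ one-one and gives $K_0\leq_1\M$. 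Since every $\Sigma^0_1$ set is $1$-reducible to $K_0$, $\M$ is $1$-complete.

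Myhill's theorem then yields creativity, because $1$-complete sets are creative. I can also exhibit creativity directly using the trichotomy. A creative function for $\M$ must, given $e$ with $W_e\subseteq\Led$, produce a point of $\Led\setminus W_e$. Let $a(e)$ be an index of the adjudicator that outputs $1$ on $p$ once $p$ is enumerated into $W_e$. Applying Theorem~\ref{thm:tri}, set $\delta:=d(a(e))$. If $\delta\in W_e$ then $a(e)$ annuls $\delta$, so $\delta\in\M$ by \eqref{tri:fix}, which contradicts $W_e\subseteq\Led$. Hence $\delta\notin W_e$, so $a(e)$ does not annul $\delta$, and therefore $\delta\in\Led$. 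So $e\mapsto d(a(e))$ is a productive function for $\Led$.

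For the computable isomorphism I will invoke Myhill's isomorphism theorem. Because $\M$ and $K$ are both $1$-complete, we have $\M\equiv_1 K$, and $1$-equivalent sets are computably isomorphic. The only step that needs care is injectivity of the reduction, which rests on padding in an acceptable numbering. As an alternative I could simply note that $K\leq_1\M$ follows from $K\leq_m\M$ together with the fact that $\M$ is cylindrical, via padding. I do not expect any step to be a real obstacle; all of this is routine classical material.
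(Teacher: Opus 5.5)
Your proof is correct and follows essentially the paper's route: the same padded s-m-n reduction witnessing $K_0\leq_1\M$, then the classical facts that a $1$-complete c.e.\ set is creative and that $1$-equivalent sets are computably isomorphic (Myhill). Your extra productive function $e\mapsto d(a(e))$ for $\Led$ comes from clause~\eqref{tri:fix} of Theorem~\ref{thm:tri}; it is valid and non-circular, since that theorem does not use this proposition, and it gives a self-contained proof of creativity where the paper simply cites the Post--Myhill theory.
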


\begin{proof}
$\M\in\Sigma^0_1$ by the normal form theorem. By s-m-n there is an
injective total computable $h$ with
$\varphi_{h(e,x)}(0)=1\iff\varphi_e(x)\!\downarrow$ (simulate
$\varphi_e(x)$, output $1$ on convergence; injectivity by padding), so
$K_0\leq_1\M$ and $\M$ is $1$-complete. A $1$-complete c.e.\ set is
creative \cite{Pos44,Rog67}, and creative sets are computably
isomorphic to $K$ by Myhill's theorem \cite{Myh55}.
\end{proof}

\begin{proposition}[The ledger]\label{prop:ledger}
$\Led=\NN\setminus\M$ is $\Pi^0_1$-complete and productive: there is a
total computable $\psi$ such that whenever $W_e\subseteq\Led$, we have
$\psi(e)\in\Led\setminus W_e$. Relativised: $\M^X$ is
$\Sigma^{0,X}_1$-complete and $\Led^X$ is $\Pi^{0,X}_1$-complete and
$X$-productive, for every $X$.
\end{proposition}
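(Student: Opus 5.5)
The plan is to obtain everything from Proposition~\ref{prop:complete} and its relativisation, together with the classical fact that complements of creative sets are productive. The relativised form will follow by running the same argument with $\varphi^X$ in place of $\varphi$, using the oracle-independence of s-m-n that was already invoked in Corollary~\ref{cor:rel}.

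First I show that $\Led$ is $\Pi^0_1$-complete. It is $\Pi^0_1$ because it is the complement of the $\Sigma^0_1$ set $\M$. Proposition~\ref{prop:complete} supplies an injective total computable $h$ with $K_0\leq_1\M$ via $h$. The same $h$ then witnesses $\overline{K_0}\leq_1\Led$. Since $\overline{K_0}$ is $\Pi^0_1$-complete, $\Led$ is $\Pi^0_1$-complete.

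Next I exhibit the productive function $\psi$ explicitly, rather than just citing Post. Given $e$, s-m-n together with the recursion theorem with parameters produce a total computable $\psi$ such that $\varphi_{\psi(e)}(0)$ searches for $\psi(e)\in W_e$ and outputs $1$ if it finds it, diverging otherwise. Then $\psi(e)\in\M\iff\psi(e)\in W_e$. Now suppose $W_e\subseteq\Led$. If $\psi(e)\in W_e$, then $\psi(e)\in\M$, contradicting $W_e\subseteq\Led$. Hence $\psi(e)\notin W_e$, so $\psi(e)\notin\M$, i.e.\ $\psi(e)\in\Led\setminus W_e$. This is the same diagonal pattern as the map $d$ of Theorem~\ref{thm:tri}, now with the verdict replaced by $W_e$-membership.

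For the relativisation, the program text defining $\psi(e)$ and $h$ does not consult the oracle except through the enumeration $W^X_e$ and the simulation of $\varphi^X_e$. So the same index functions work for every $X$: $K^X_0\leq_1\M^X$, whence $\M^X$ is $\Sigma^{0,X}_1$-complete and $\Led^X$ is $\Pi^{0,X}_1$-complete. Likewise $\psi$ is an $X$-productive function for $\Led^X$, even a computable one.

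There is no serious obstacle. The only point needing care is to justify that the relativised recursion theorem yields the \emph{same} computable $\psi$ uniformly in $X$, and that is exactly the argument already given in Corollary~\ref{cor:rel}.
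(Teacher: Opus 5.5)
Your proof is correct and follows the paper's route for most of the statement. The $\Pi^0_1$-completeness of $\Led$ comes, as in the paper, from complementing the $1$-reduction $h$ of Proposition~\ref{prop:complete}, and the relativised claims come from the oracle-independence of s-m-n and the recursion theorem, exactly as in Corollary~\ref{cor:rel}.

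The one difference is in how you get productivity. The paper reads it off the creativeness of $\M$ via Post--Myhill theory. You instead construct $\psi$ directly by a recursion-theorem diagonal; in effect $\psi(e)=d(a_e)$, where $a_e$ is an index of the semi-characteristic function of $W_e$. This has the mild advantage of showing that a single computable $\psi$ is $X$-productive for $\Led^X$ for every $X$ at once.
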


\begin{proof}
Immediate from Proposition~\ref{prop:complete} and the
Post--Myhill theory of creative sets \cite{Pos44,Rog67}; the
relativised claims follow by relativising the proofs, using
Corollary~\ref{cor:rel} for uniformity.
\end{proof}

\begin{remark}
Productivity is the effective content of the slogan that any attempted
census of the annulled generates its own counterexample: from an index
of any c.e.\ list of true annulments, $\psi$ computes a true annulment
missing from the list. Together with Proposition~\ref{prop:complete},
the manifestation problem is, up to a computable permutation of
indices, \emph{the} halting problem.
\end{remark}

\begin{proposition}[No effective complementation]\label{prop:nocomp}
There is no total computable $h$ with $h(p)\in\M\iff p\notin\M$ for
all $p$.
\end{proposition}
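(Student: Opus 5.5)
The plan is a direct diagonal argument via the recursion theorem with parameters, following the pattern of Theorem~\ref{thm:tri}. Suppose, for contradiction, that $h$ is total computable with $h(p)\in\M\iff p\notin\M$ for all $p$. The goal is a fixed point $e$ with $\varphi_e=\varphi_{h(e)}$. Such an $e$ satisfies $e\in\M\iff h(e)\in\M$, since membership in $\M$ depends only on the function computed, namely on $\varphi(0)$. This contradicts the hypothesis at $p=e$.

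The key steps, in order, are as follows.
\begin{enumerate}
\item Observe that $\M$ is an index set: $\varphi_p=\varphi_{p'}$ implies $p\in\M\iff p'\in\M$. This is immediate from Definition~\ref{def:structure}.
\item Apply Kleene's recursion theorem to the total computable $h$ to obtain $e$ with $\varphi_e=\varphi_{h(e)}$.
\item Conclude $e\in\M\iff h(e)\in\M\iff e\notin\M$, which is absurd.
\end{enumerate}
No parameters are needed, since $h$ is fixed.

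An alternative route, closer in spirit to the paper, would reduce to Theorem~\ref{thm:tri}: from $h$, build the adjudicator $a$ with $\varphi_a(p)=\varphi_{h(p)}(0)$ when that converges to $1$. This route is less clean, because it yields only semi-decision information. I will therefore use the recursion-theorem argument and mention in a sentence that it is a Rice-style fact: $\M$ is a nontrivial index set, and no total computable function maps it into its complement and vice versa.

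There is essentially no obstacle. The only point needing care is step~1, namely that manifestation depends on $\varphi_p$ rather than on $p$ itself, so that the fixed point transfers membership. I will also note that the result relativises verbatim via Corollary~\ref{cor:rel}, since the recursion theorem is oracle-independent.
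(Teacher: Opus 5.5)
Your argument is correct, but it takes a different route from the paper's. The paper argues by complexity: such an $h$ would satisfy $p\in\Led\iff h(p)\in\M$, i.e.\ witness $\Led\leq_m\M$, making the $\Pi^0_1$-complete set $\Led$ c.e., contrary to Proposition~\ref{prop:ledger}. You instead use only that $\M$ is an index set, and take a Kleene fixed point of $h$.

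The two arguments generalise in opposite directions. The paper's argument is a one-line consequence of the structure theory of Section~\ref{sec:structure} and uses no extensionality. It rules out such an $h$ for every c.e.\ non-computable set, including non-index sets like $K$. There your transfer step, from $\varphi_e=\varphi_{h(e)}$ to $e\in\M\iff h(e)\in\M$, is unavailable. Your argument is self-contained: it does not presuppose the undecidability or completeness of $\M$. It also uses nothing about the complexity of the set, so it shows $\overline{A}\not\leq_m A$ for every index set $A$ whatsoever.

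Your argument is also uniform. By the recursion theorem with parameters, the fixed point is computable from an index of $h$. You therefore get a computable map sending each candidate complementer to a distinction at which it fails, in the spirit of the witness map $d$ of Theorem~\ref{thm:tri}. The reduction argument, being by contradiction, does not locate the failure.

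A small point on your closing remark: for $X$-computable $h$ the relevant tool is the recursion theorem relativised to $X$. The oracle-independence recorded in Corollary~\ref{cor:rel} concerns computable index maps, so it is not the right citation, though the conclusion is unaffected.
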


\begin{proof}
Such an $h$ would witness $\Led\leq_m\M$, placing the
$\Pi^0_1$-complete set $\Led$ in $\Sigma^0_1$, a contradiction.
\end{proof}

Alongside $\M$, consider
\[
E_0=\{p:\varphi_p(0)\!\downarrow=0\},
\]
the \emph{explicitly exempt} distinctions: those whose computation
halts without the mark. $\M$ and $E_0$ are disjoint c.e.\ sets, and
$E_0\subseteq\Led$.

\begin{theorem}[Effective inseparability]\label{thm:ei}
$(\M,E_0)$ is an effectively inseparable pair: there is a total
computable $f$ such that whenever $\M\subseteq W_i$,
$E_0\subseteq W_j$ and $W_i\cap W_j=\varnothing$, then
$f(i,j)\notin W_i\cup W_j$.
\end{theorem}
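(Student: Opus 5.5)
The plan is to run the standard effective-inseparability argument through the parametrised recursion theorem, in the same way $d$ was built in Theorem~\ref{thm:tri}. Given $i,j$, consider a program $r(i,j,x)$ that, on any input, enumerates $W_i$ and $W_j$ simultaneously, searching for $x$. It outputs $0$ if $x$ turns up in $W_i$ first, outputs $1$ if $x$ turns up in $W_j$ first, and diverges if neither ever happens. By s-m-n there is a total computable $s(i,j,x)$ with $\varphi_{s(i,j,x)}(y)=r(i,j,x)$ for all $y$. The recursion theorem with parameters then supplies a total computable $f$ with $\varphi_{f(i,j)}=\varphi_{s(i,j,f(i,j))}$, so that
\[
\varphi_{f(i,j)}(0)\simeq r(i,j,f(i,j)).
\]
This is exactly the self-referential witness of Theorem~\ref{thm:tri}. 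The only change is that the three-valued verdict $\theta$ is replaced by a race between the two candidate separating sets.

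Now assume $\M\subseteq W_i$, $E_0\subseteq W_j$ and $W_i\cap W_j=\varnothing$, and write $p=f(i,j)$. I would check the two bad cases one at a time. First suppose $p\in W_i$. Disjointness means $p$ never appears in $W_j$, so the race is won by $W_i$, giving $\varphi_p(0)\!\downarrow=0$. Hence $p\in E_0\subseteq W_j$, which contradicts disjointness. Next suppose $p\in W_j$. By the symmetric argument, $\varphi_p(0)\!\downarrow=1$, so $p\in\M\subseteq W_i$, again a contradiction. It follows that $p\notin W_i\cup W_j$. As a by-product, in that case $\varphi_p(0)\!\uparrow$, so $p\in\Led\setminus E_0$, which nicely exhibits the third horn of the trichotomy.

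The main point to get right is the orientation of the outputs. The program has to do the \emph{opposite} of what the set it lands in predicts: landing in $W_i$, the designated superset of $\M$, must make $p$ exempt, and landing in $W_j$ must make $p$ manifest. A second subtlety is the race itself. It has to be dovetailed by stages, with ties broken arbitrarily. Under the disjointness hypothesis a tie cannot occur, and when the hypothesis fails no requirement is placed on $f$, so $f$ remains total and uniform in $(i,j)$ whether or not the hypotheses hold. That uniformity is what the definition of effective inseparability asks for. Finally, the construction manipulates program text only, so by the same argument as Corollary~\ref{cor:rel} it relativises verbatim to every oracle $X$.
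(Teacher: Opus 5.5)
Your proposal is correct and is essentially the paper's own proof: the same recursion-theorem witness $f(i,j)$ races the enumerations of $W_i$ and $W_j$, outputs $0$ on landing in $W_i$ and $1$ on landing in $W_j$, and yields the same two-case contradiction with disjointness. The only cosmetic difference is that you split cases by whether $p$ lies in $W_i$ or in $W_j$, while the paper splits by which enumeration terminates the search; under disjointness these are equivalent.
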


\begin{proof}
By s-m-n and the recursion theorem with parameters there is a total
computable $f$ such that $\varphi_{f(i,j)}(0)$ behaves as follows: it
dovetails the enumerations of $W_i$ and $W_j$, searching for the
number $f(i,j)$ itself; if $f(i,j)$ is found first in $W_j$ it outputs
$1$; if first in $W_i$ it outputs $0$; if never, it diverges. Write
$n=f(i,j)$ and suppose $\M\subseteq W_i$, $E_0\subseteq W_j$,
$W_i\cap W_j=\varnothing$, and $n\in W_i\cup W_j$; then the search
terminates. If it terminates through $W_j$, then $\varphi_n(0)=1$, so
$n\in\M\subseteq W_i$, while also $n\in W_j$: this contradicts
disjointness. If it terminates through $W_i$, then $\varphi_n(0)=0$,
so $n\in E_0\subseteq W_j$, and again $n\in W_i\cap W_j$. Hence
$n\notin W_i\cup W_j$.
\end{proof}

\begin{corollary}[No computable separation]\label{cor:insep}
There is no computable $C$ with $\M\subseteq C$ and
$C\cap E_0=\varnothing$. In annulment terms: no total computable
adjudicator sorts the manifesting from the explicitly exempt, however
it is allowed to classify the silent distinctions. This strengthens
Corollary~\ref{cor:nodecider}: not only is $\M$ undecidable, it
cannot even be computably separated from its certified-exempt part.
\end{corollary}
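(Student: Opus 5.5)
The statement to prove is Corollary~\ref{cor:insep}. I would derive it directly from Theorem~\ref{thm:ei}, by contradiction.

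Suppose $C$ is computable with $\M\subseteq C$ and $C\cap E_0=\varnothing$. Since $C$ is computable, both $C$ and $\NN\setminus C$ are c.e., so indices $i,j$ can be fixed with $W_i=C$ and $W_j=\NN\setminus C$. The hypotheses of Theorem~\ref{thm:ei} then hold:
\begin{itemize}
\item $\M\subseteq W_i$ holds by assumption;
\item $E_0\subseteq W_j$ holds because $C\cap E_0=\varnothing$;
\item $W_i\cap W_j=\varnothing$ holds trivially.
\end{itemize}
The theorem therefore gives $f(i,j)\notin W_i\cup W_j=\NN$, which is absurd. Hence no such $C$ exists.

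For the annulment reading, take a total computable adjudicator $a$ that annuls every member of $E_0$ and exempts every member of $\M$, or the reverse. Let $C=\{p:\varphi_a(p)\neq1\}$, or its complement in the reversed case. This $C$ is computable because $a$ is total, and it separates $\M$ from $E_0$. The verdicts on distinctions outside $\M\cup E_0$, the silent ones, are unconstrained, so this matches the phrase "however it is allowed to classify the silent distinctions."

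For the strengthening claim, note that a decision procedure for $\M$ would yield $C=\M$, which is a computable separator. So the non-separability result implies Corollary~\ref{cor:nodecider}.

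No step here is a real obstacle. The only point needing care is making sure the annulment phrasing is matched to the set-theoretic statement in both orientations.
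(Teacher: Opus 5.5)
Your proof is correct and is essentially the paper's own: take $W_i=C$ and $W_j=\NN\setminus C$, then apply Theorem~\ref{thm:ei} to obtain the absurdity $f(i,j)\notin\NN$. Your extra checks are also sound: the annulment reading (in both orientations) and the implication to Corollary~\ref{cor:nodecider} (via $C=\M$, using $\M\cap E_0=\varnothing$) just make explicit what the paper states as commentary.
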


\begin{proof}
If $C$ were such a set, then $C$ and $\NN\setminus C$ are c.e.,
$\M\subseteq C$, $E_0\subseteq\NN\setminus C$, and they are
disjoint; Theorem~\ref{thm:ei} yields
$n\notin C\cup(\NN\setminus C)=\NN$, which is absurd.
\end{proof}

\begin{remark}
By Smullyan's isomorphism theory for effectively inseparable pairs of
disjoint c.e.\ sets \cite{Smu61}, the pair $(\M,E_0)$ is computably
isomorphic to the canonical pair
$(\{e:\varphi_e(e)\!\downarrow=1\},\{e:\varphi_e(e)\!\downarrow=0\})$:
manifestation versus explicit exemption is, up to a computable
permutation of indices, \emph{the} canonical effectively inseparable
dichotomy.
\end{remark}

\section{Index sets: the exact complexity of the adjudicator properties}\label{sec:indexsets}

Write $\mathrm{S}=\{a:\text{$a$ is sound}\}$,
$\mathrm{T}=\{a:\text{$a$ is total}\}$,
$\mathrm{ET}=\{a:\forall p\,\varphi_a(p)\!\downarrow=1\}$, and let
$H_i=\{a:\text{case (i) of Theorem~\ref{thm:tri} obtains at }
\delta_a\}$ and similarly $H_{ii}$, $H_{iii}$.

\begin{theorem}\label{thm:index}
\begin{enumerate}
\item $\mathrm{S}$ is $\Pi^0_1$-complete.
\item $\mathrm{T}$, $\mathrm{ET}$, and $\mathrm{T}\cap\mathrm{S}$ are
$\Pi^0_2$-complete.
\item $H_i$ and $H_{ii}$ are $\Sigma^0_1$-complete; $H_{iii}$ is
$\Pi^0_1$-complete.
\end{enumerate}
\end{theorem}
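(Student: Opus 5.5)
For each item I would first read off the upper bound from the defining formula and then prove hardness by an s-m-n reduction from a standard complete set. For (1), $a\in\mathrm{S}$ iff $\forall p\,\forall s\,\neg(\varphi_{a,s}(p)\!\downarrow=1\wedge \varphi_{p,s}(0)\!\downarrow=1)$, which is $\Pi^0_1$. For hardness I would reduce $\overline{K}$: by s-m-n let $\varphi_{g(e)}(p)=1$ if $\varphi_e(e)\!\downarrow$ and $p=p_0$, where $p_0$ is a fixed index of the constant-mark program, and let it diverge otherwise. If $e\in K$, then $g(e)$ annuls $p_0\in\M$ and is unsound. If $e\notin K$, then $g(e)$ is everywhere silent and hence vacuously sound. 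So $\overline{K}\leq_m\mathrm{S}$.

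For (2), $\mathrm{T}$ is the classical $\Pi^0_2$-complete set $\mathrm{Tot}$, and $\mathrm{ET}$ has the $\Pi^0_2$ form $\forall p\,\exists s\,\varphi_{a,s}(p)\!\downarrow=1$. $\mathrm{T}\cap\mathrm{S}$ is an intersection of a $\Pi^0_2$ and a $\Pi^0_1$ set, so it is $\Pi^0_2$. For hardness I would use a single reduction from $\mathrm{Tot}$. Let $\varphi_{h(e)}(p)$ simulate $\varphi_e(p)$ and output $0$ on convergence. Then $h(e)$ is total iff $e\in\mathrm{Tot}$, and it never annuls, so it is always sound. This gives $\mathrm{Tot}\leq_m\mathrm{T}$ and $\mathrm{Tot}\leq_m\mathrm{T}\cap\mathrm{S}$. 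Outputting $1$ instead of $0$ gives $\mathrm{Tot}\leq_m\mathrm{ET}$.

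For (3), the upper bounds use that $d$ is total computable (Theorem~\ref{thm:tri}). Membership $a\in H_i$ iff $\varphi_a(d(a))\!\downarrow=1$, and $a\in H_{ii}$ iff $\varphi_a(d(a))\!\downarrow\neq1$, are both $\Sigma^0_1$. $H_{iii}$ is the complement of $H_i\cup H_{ii}$, hence $\Pi^0_1$. It then suffices to prove $K\leq_m H_i$ and $K\leq_m H_{ii}$, since $\overline{H_{iii}}=H_i\cup H_{ii}$ and $K\leq_m H_i$ also gives $\overline{K}\leq_m H_{iii}$ by the same map. Which distinction $\delta_a=d(a)$ gets queried depends on the index $a$, so I would make the reduction ignore its input: let $\varphi_{r(e)}(x)=1$ if $\varphi_e(e)\!\downarrow$ and let it diverge otherwise. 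Then $r(e)\in H_i$ iff $e\in K$, and otherwise $r(e)\in H_{iii}$. The variant outputting $0$ handles $H_{ii}$.

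None of the steps is deep. The main point needing care is item (3): the defining condition refers to $\delta_a=d(a)$, so the reduction must keep the behaviour of $\varphi_{r(e)}$ independent of where it is evaluated, which the constant-in-$x$ construction ensures. Every reduction is computable by s-m-n. In the padding-free form given here they are only many-one, but that is all completeness requires.
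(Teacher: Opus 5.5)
Your proposal is correct and follows the paper's proof essentially step for step. It uses the same upper-bound formulas, the same s-m-n reductions from $\overline{K}$ and $\mathrm{Tot}$ (output $1$ for $\mathrm{ET}$, output $0$ for $\mathrm{T}\cap\mathrm{S}$), and the same input-independent adjudicator for item (3); the only differences are cosmetic --- in (1) you annul only the fixed manifesting index $p_0$ where the paper annuls everything, and in (3) you reduce from $K$ rather than $K_0$.
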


\begin{proof}
(1) $a\in\mathrm{S}$ iff
$\forall p\,\neg(\varphi_a(p)\!\downarrow=1\wedge\varphi_p(0)\!\downarrow=1)$,
a $\Pi^0_1$ condition. For hardness we reduce the complement of $K$:
by s-m-n, from $e$ compute an index $a_e$ which, on any input,
simulates $\varphi_e(e)$ and outputs $1$ upon convergence, diverging
otherwise. If $e\notin K$ then $a_e$ is everywhere silent, hence
vacuously sound; if $e\in K$ then $a_e$ annuls every distinction,
including manifesting ones (which exist), so $a_e\notin\mathrm{S}$.
Thus $e\notin K\iff a_e\in\mathrm{S}$: a many-one reduction of the
complement of $K$ to $\mathrm{S}$.

(2) $\mathrm{T}=\mathrm{Tot}$ is $\Pi^0_2$-complete classically
\cite{Rog67}. $\mathrm{ET}$ is $\Pi^0_2$ ($\forall p\,\exists
s\,[\varphi_{a,s}(p)\!\downarrow=1]$); for hardness, given $e$ let
$a_e(p)$ simulate $\varphi_e(p)$ and output $1$ on convergence: then
$a_e\in\mathrm{ET}\iff e\in\mathrm{Tot}$. For
$\mathrm{T}\cap\mathrm{S}$: membership is
$\Pi^0_2\wedge\Pi^0_1=\Pi^0_2$; for hardness, given $e$ let $a_e(p)$
simulate $\varphi_e(p)$ and output $0$ (exempt) on convergence: $a_e$
never annuls, hence is vacuously sound, and is total iff
$e\in\mathrm{Tot}$.

(3) $H_i\cup H_{ii}=\{a:\varphi_a(\delta_a)\!\downarrow\}$ and each of
$H_i$, $H_{ii}$ is manifestly $\Sigma^0_1$; $H_{iii}$ is the
complement of a $\Sigma^0_1$ set, hence $\Pi^0_1$. Hardness of $H_i$:
given $\langle e,x\rangle$, let $a$ simulate $\varphi_e(x)$ on any
input and output $1$ on convergence; then $a\in H_i\iff\langle
e,x\rangle\in K_0$. Hardness of $H_{ii}$: the same with output $0$.
Hardness of $H_{iii}$: the same construction reduces the complement of
$K_0$ to $H_{iii}$.
\end{proof}

Case (i) is where a total mission-complete adjudicator lands: (T)
excludes case (iii), and (M) excludes case (ii), since
by~\eqref{tri:fix} an exempted $\delta_a$ lies outside $\M$ and
mission-completeness would then require its annulment. The membership
of $H_i$ is then witnessed by a finite computation --- the failure of
soundness is not merely true but effectively observable.

\section{Relation to categorical and numbering-theoretic frameworks}\label{sec:lawvere}

Diagonal arguments admit a well-known unifying treatment, initiated by
Lawvere \cite{Law69} and developed by Yanofsky \cite{Yan03} and Bauer
\cite{Bau17}: in a cartesian closed category, a weakly point-surjective
$A\to Y^A$ forces every endomorphism of $Y$ to have a fixed point, and
the contrapositive subsumes the arguments of Cantor, Russell, Tarski
and Turing. It is therefore reasonable to ask what, if anything, the
present framework adds. We record the answer, which is a matter of
which side of a known divide the constructions fall on, rather than of
novelty.

The divide is between Kleene's two recursion theorems. The first is a
statement about computation at higher types: it concerns effective
operations, that is, transformations induced by \emph{extensional}
functions --- those $f$ with $\varphi_a=\varphi_b\Rightarrow
\varphi_{f(a)}=\varphi_{f(b)}$ --- and it delivers least fixed points
by iteration. The second is a first-order, diagonal statement about
indices, and it places no extensionality requirement on anything.
Lawvere's theorem corresponds to a restricted form of the first
\cite{Kav17}. That the two are genuinely different, even where both
apply, is Rogers' observation that there is an extensional function
whose fixed point as delivered by the second recursion theorem is not
the least one \cite[\S11-XIII]{Rog67}.

Every construction of this paper falls on the second, intensional
side. In Theorem~\ref{thm:tri} the adjudicator $a$ is applied to the
\emph{index} $\delta_a$, not to the function $\varphi_{\delta_a}$, and
the theorem is asserted for arbitrary $a$ --- in particular for
adjudicators that inspect program text, such as one annulling exactly
the distinctions of even index. Such an $a$ induces no effective
operation and lies outside the scope of the Lawvere scheme, which
requires the transformation to depend on extension alone.
The generalisation of Remark~\ref{rem:numberings} has the same
character, Ershov's theorem being a generalisation of the second
recursion theorem rather than of the first; and the gate of Lemma~\ref{lem:gate} queries an adjudicator
at an index, not at a behaviour.

We should be equally clear about what does \emph{not} distinguish the
framework. Partiality alone does not: with lifted objects and the
multi-valued formulation of \cite{Bau17}, the Lawvere scheme
accommodates divergence, and the interest of the third horn lies in
its pricing of silence (Remark~\ref{rem:numberings}) rather than in
any categorical inaccessibility. Nor is the intensional side itself
unaxiomatised: the exposures of \cite{Kav17} give a categorical
account of intensional fixed points from which abstract forms of the
theorems of G\"odel, Tarski and Rice are derived, and
Theorem~\ref{thm:tri} would be at home there.

What is not on offer in any of these frameworks is a boundary. They
unify statements of impossibility; they do not say which domains admit
a total correct adjudicator and which do not, nor identify the single
syntactic feature that separates the two. That is the content of
Section~\ref{sec:decidable}, and it is the claim of this paper.

\section{Questions}\label{sec:questions}

The two questions below correspond to two sides of the boundary:
the decidable interior and the crossing itself.

\begin{question}\label{q:cert}
Characterise the domains $D$, presented by sublanguages of the
re-entry language of Section~\ref{sec:decidable}, for which the
canonical adjudicator is computable with certificates of polynomial
size; equivalently, locate the certificate-complexity boundary inside
the decidable side.
\end{question}

\begin{question}\label{q:bounded}
Section~\ref{sec:bounded} shows that each finite level of the
hierarchy is decidable and that stabilisation at finite closure is
\PS-complete. Two things are left open. First, is the
$\Sigma^0_2$ bound of Proposition~\ref{prop:sigma2} sharp --- is
stabilisation $\Sigma^0_2$-complete when the query closure is
unbounded? Second, is the correspondence of Remark~\ref{rem:mean}
exact: does every reflective oracle in the sense of \cite{FTC15}
agree, at every distinction where the hierarchy is eventually
periodic, with the mean frequency of the mark along its cycle? An
affirmative answer would exhibit the randomisation of reflective
oracles as the assignment of limiting frequencies rather than as an
independent construction.
\end{question}

\section*{Declaration on the use of AI tools}
All mathematical content of this paper --- the framework, the
definitions, the theorems, and their proofs --- is the author's own. A
large language model (Claude, Anthropic) was used as an editorial aid
in the preparation of the manuscript: for referee-style review of
drafts, for verification of bibliographic data, for consistency checks
on cross-references and terminology, for suggestions on the wording of
proofs, and for language polishing. All such output was
verified by the author, who takes full responsibility for the entire
content of the paper.


\begin{thebibliography}{99}\setlength\itemsep{1pt}

\bibitem{Bau17} A.~Bauer, On fixed-point theorems in synthetic
computability, \emph{Tbilisi Mathematical Journal} 10 (2017), no.~3,
167--181.

\bibitem{Buc62} J.~R.~B\"uchi, On a decision method in restricted
second order arithmetic, in: \emph{Logic, Methodology and Philosophy
of Science (Proc.\ 1960 Congr.)}, Stanford University Press, 1962,
1--11.

\bibitem{Elg61} C.~C.~Elgot, Decision problems of finite automata
design and related arithmetics, \emph{Trans.\ Amer.\ Math.\ Soc.} 98
(1961), 21--51.

\bibitem{Ers73} Yu.~L.~Ershov, Theorie der Numerierungen I,
\emph{Zeitschrift f\"ur mathematische Logik und Grundlagen der
Mathematik} 19 (1973), 289--388.

\bibitem{FTC15} B.~Fallenstein, J.~Taylor and P.~F.~Christiano,
Reflective oracles: a foundation for game theory in artificial
intelligence, in: W.~van der Hoek, W.~Holliday and W.~Wang (eds.),
\emph{Logic, Rationality, and Interaction (LORI 2015)}, Lecture Notes
in Computer Science 9394, Springer, 2015, 411--415.

\bibitem{Kav17} G.~A.~Kavvos, \emph{On the Semantics of Intensionality
and Intensional Recursion}, D.Phil.\ thesis, University of Oxford,
2017; arXiv:1712.09302 [cs.LO].

\bibitem{KV80} L.~H.~Kauffman and F.~J.~Varela, Form dynamics,
\emph{Journal of Social and Biological Structures} 3 (1980), 171--206.


\bibitem{Kle38} S.~C.~Kleene, On notation for ordinal numbers,
\emph{Journal of Symbolic Logic} 3 (1938), 150--155.

\bibitem{Law69} F.~W.~Lawvere, Diagonal arguments and cartesian closed
categories, in: \emph{Category Theory, Homology Theory and their
Applications II}, Lecture Notes in Mathematics 92, Springer, 1969,
134--145; reprinted in \emph{Reprints in Theory and Applications of
Categories} 15 (2006), 1--13.

\bibitem{Myh55} J.~Myhill, Creative sets, \emph{Zeitschrift f\"ur
mathematische Logik und Grundlagen der Mathematik} 1 (1955), 97--108.


\bibitem{Pos44} E.~L.~Post, Recursively enumerable sets of positive
integers and their decision problems, \emph{Bulletin of the American
Mathematical Society} 50 (1944), 284--316.

\bibitem{Pre29} M.~Presburger, \"Uber die Vollst\"andigkeit eines
gewissen Systems der Arithmetik ganzer Zahlen, in welchem die Addition
als einzige Operation hervortritt, in: \emph{Comptes Rendus du I
Congr\`es des Math\'ematiciens des Pays Slaves}, Warszawa, 1929,
92--101, 395.

\bibitem{Rog67} H.~Rogers, Jr., \emph{Theory of Recursive Functions
and Effective Computability}, McGraw-Hill, New York, 1967.

\bibitem{Sif26} P.~Sifnaios, Weak essentially undecidable theories of
hereditarily finite multisets, arXiv:2607.11367 [math.LO], 2026.

\bibitem{Smu61} R.~M.~Smullyan, \emph{Theory of Formal Systems},
Annals of Mathematics Studies 47, Princeton University Press,
Princeton, 1961.

\bibitem{Soa87} R.~I.~Soare, \emph{Recursively Enumerable Sets and
Degrees}, Springer, Berlin, 1987.

\bibitem{SB69} G.~Spencer-Brown, \emph{Laws of Form}, George Allen and
Unwin, London, 1969.

\bibitem{Var75} F.~J.~Varela, A calculus for self-reference,
\emph{International Journal of General Systems} 2 (1975), 5--24.

\bibitem{WB95} P.~Wolper and B.~Boigelot, An automata-theoretic
approach to Presburger arithmetic constraints, in: \emph{Static
Analysis (SAS~'95)}, Lecture Notes in Computer Science 983, Springer,
1995, 21--32.

\bibitem{Yan03} N.~S.~Yanofsky, A universal approach to
self-referential paradoxes, incompleteness and fixed points,
\emph{Bulletin of Symbolic Logic} 9 (2003), no.~3, 362--386.

\end{thebibliography}
\end{document}